\definecolor{babyblueeyes}{rgb}{0.63, 0.79, 0.95}
\newtheorem{theorem}{Theorem}[section]
\newtheorem{lemma}[theorem]{Lemma}
\newtheorem{proposition}[theorem]{Proposition}
\newtheorem{corollary}[theorem]{Corollary}
\theoremstyle{definition}
\newtheorem{definition}[theorem]{Definition}
\theoremstyle{remark}
\newtheorem{remark}[theorem]{Remark}
\newtheorem{example}[theorem]{Example}
\newtheorem{notation}[theorem]{Notation}
\newcommand{\numberset}{\mathbb}
\newcommand{\N}{\numberset{N}}
\newcommand{\R}{\numberset{R}}
\newcommand{\Z}{\numberset{Z}}
\newcommand{\C}{\numberset{C}}
\newcommand{\Q}{\numberset{Q}}
\newcommand{\PP}{\mathbb{P}^\infty\C}
\newcommand{\SSS}{\mathbb{S}} 
\newcommand{\rk}{{{\rm rk}}}
\title[An exposition of GHRR/2 thm in the fancy language of spectra]{An exposition of the topological half of the Grothendieck--Hirzebruch--Riemann--Roch theorem in the fancy language of spectra}
\author{Mattia Coloma}
\address{Università degli Studi di Roma "Tor Vergata"; Dipartimento di Matematica, Via della Ricerca Scientifica, 1 - 00133 - Roma, Italy; 
}
\email{coloma@mat.uniroma2.it}
\author{Domenico Fiorenza}
\address{Sapienza Universit\`a di Roma; Dipartimento di Matematica ``Guido Castelnuovo'', P.le Aldo Moro, 5 - 00185 - Roma, Italy; 
}
\email{fiorenza@mat.uniroma1.it}
\author{Eugenio Landi}
\address{Università di Roma Tre; Dipartimento di Matematica e Fisica Largo San Leonardo Murialdo, 1 - 00146 - Roma, Italy;
}
\email{eugenio.landi@uniroma3.it}
\begin{document}

\renewcommand\refname{Index Librorum Prohibitorum}

\begin{abstract}
We give an informal exposition of pushforwards and orientations in generalized cohomology theories in the language of spectra. The whole note can be seen as an attempt at convincing the reader that Todd classes in Grothendieck--Hirzebruch--Riemann--Roch type formulas are not Devil's appearances but rather that things just go in the most natural possible way. \\
\begin{center}
\includegraphics[scale=0.40]{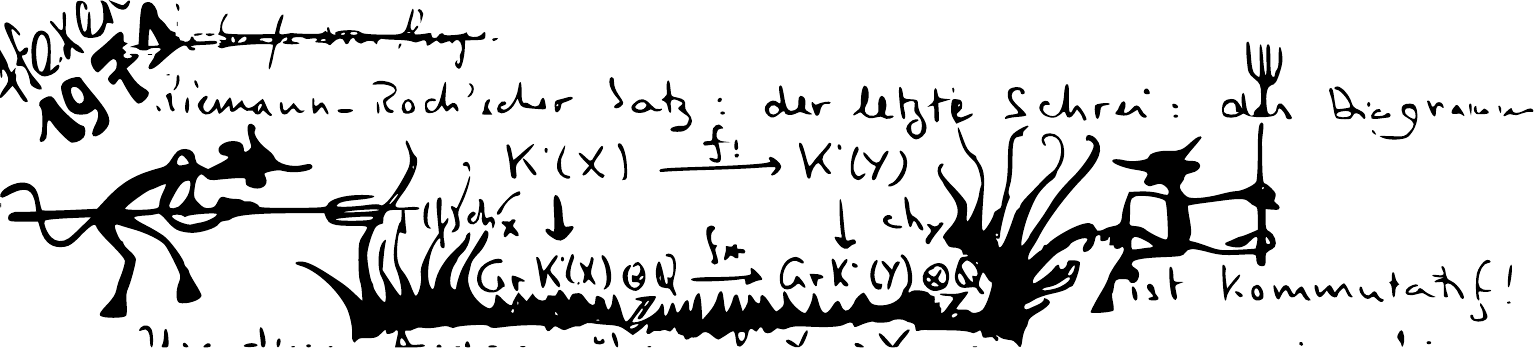}
\end{center}
\end{abstract}
	\maketitle
\tableofcontents

\section{Captatio benevolenti\ae}

After tensoring by $\Q$, the Chern character induces a natural isomorphism of rings
\[
\mathrm{ch}\colon K^0(X)\otimes \Q \to \bigoplus_{i\in \Z}H^{2i}(X;\Q)
\]
from the complex $K$-theory of a (nice) topological space $X$ to its even singular cohomology with rational coefficients. Moreover, both complex $K$-theory and singular cohomology have natural complex orientations. Simplifying a bit, this means that if $X$ is a compact complex manifold, one has pushforward maps
\[
\pi_{X,\ast}^{KU}\colon K^0(X)\otimes \Q\to K^{-2\dim_\C X}(\ast)\otimes \Q=\Q
\]
and
\[
\pi_{X,\ast}^{H\Q}=\int_X\colon \bigoplus_{i\in \Z}H^{2i}(X;\Q)\to \bigoplus_{i\in \Z}H^{2i-2\dim_\C X}(\ast;\Q)=\Q
\]
Since everything is very natural here, one would expect that in the best of possible worlds the diagram
\[
\begin{tikzcd}
K^0(X)\otimes \Q \arrow[dr, "\pi_{X,\ast}^{KU}"'] \arrow[rr, "\mathrm{ch}", "\sim"']      &                                                & \bigoplus_{i\in \Z}H^{2i}(X;\Q) \arrow[dl, "\int_X"]                 \\
 & \Q &  
\end{tikzcd}
\]
would commute, i.e., to have an integral formula of the form
\[
\pi_{X,\ast}^{KU}([V])=\int_X \mathrm{ch}(V)
\]
for any complex vector bundle $V$ on $X$.
Unfortunately, this formula is notoriously not correct: it becomes so only after the introduction of a suitable multiplicative correction factor, the Todd class of $X$, given by the cohomology class
\[
\mathrm{td}(X)=
1+ \frac{c_1(X)}{2}+\frac{c_1(X)^2+c_2(X)}{12}+\frac{c_1(X)c_2(X)}{24}+\cdots
\]
This rather formidable expression is obtained from the characteristic power series
\[
\frac{u}{1-e^{-u}}=1+\frac{u}{2}+\frac{u^2}{12}-\frac{u^4}{720}+\cdots
\]
by applying the splitting principle to the holomorphic tangent bundle $TX$ of $X$, i.e., if $TX$ splits as a direct sum of complex line bundles $L_i$, then
\[
\mathrm{td}(X)=\prod_{i=1}^{\dim_\C X}\frac{u}{1-e^{-u}}\biggr\vert_{u=c_1(L_i)}.
\]
On first sight, both the presence of the factor $\mathrm{td}(X)$ in the corrected formula
\[
\pi_{X,\ast}^{KU}([V])=\int_X \mathrm{ch}(V)\,\mathrm{td}(X)
\]
and its expression in terms of Chern classes may appear rather mysterious. The main goal of this paper is to try to convince the reader that there is actually no mystery here, and that on the contrary the specific correction factor $\mathrm{td}(X)$ is precisely what one should have expected from the very beginning. We will try to achieve this within an informal exposition of the theory of pushforwards and orientations in cohomology, 
with an emphasis on categorical features of the category of spectra coming into play. 
\\
We make no claim of originality. 
Everything we write can be found elsewhere, and plenty of references will be given throughout the text. We owe most of our gratitude for the inspiration to Ando, Blumberg and Gepner and their paper \cite{ABG}, to Panin and Smirnov for their \cite{paninsmirnov} and \cite{panin2002riemann}, and of course to Quillen \cite{quillen1971elementary}.
We have also blatantly stolen from Lurie's \cite{Lurieelliptic} and \cite{luriechromatic}, from Ando, Blumberg Gepner, Hopkins and Rezk's \cite{ando2008units} and \cite{ando2010multiplicative}, and from May's classic \cite{May-E-infty}.\\
We wish to reassure the reader that, although at the very beginning we will be mentioning that the category of spectra is a symmetric monoidal stable $\infty$-category in the sense Lurie's Higher Algebra \cite{lurie2012higher}, no advanced knowledge of $\infty$-category theory is actually required to read this note as we will treat the higher categorical aspects of spectra only very intuitively and we will be mostly working in the homotopy category of spectra, which is an ordinary category (i.e., a 1-category).\\
We thank Bertram Arnold, Nicholas Kuhn, Peter May, Denis Nardin and Dylan Wilson, whose answers to our random and often naive questions on MO have actually been an invaluable encouragement at an early stage of this research; Matthias Kreck and Peter Teichner, for influential comments on a preliminary draft of these notes and on a talk based on them; Urs Schreiber, the Referee and the Editor for comments, corrections and suggestions that greatly helped us in improving the exposition from the first arXiv version to the present one; Fosco Loregian, for countless coffees, beers and profunctors shared with us; and Cliff Booth, for having pushed us to conclude the writing of this note. d.f. thanks MPIM Bonn for the excellent and friendly research environment that surrounded a revision of this note in a rainy week of November. 

\section{Introitus}
We will be working in the stable $\infty$-category $\mathsf{Sp}$ of spectra. 
We reassure the reader possibly unfamiliar both with spectra and/or with $\infty$-categories (stable or not) that a previous knowledge of these may be useful but not necessary at all. A good motto to keep in mind is the following informal analogy: spectra are to spaces as real numbers are to rational numbers. By this we do not only mean that spectra are certain sequences of (nice) topological spaces just like real numbers are (equivalence classes of) certain sequences of rational numbers, but also --and this is the main content of the motto-- that one actually works with spectra by knowing the categorical features of the category they form and its relations to the category of topological spaces, more than with their actual definition entailing these features. In terms of the analogy, this is to say that $\mathbb{R}$ can be defined as a suitable quotient of the set of Cauchy sequences of rational numbers, but what one generally uses when working with real numbers is not this definition but the properties it implies: e.g., that $\mathbb{R}$ is a complete ordered field. This is actually a complete definition of $\mathbb{R}$: a complete   
ordered field, if it exists, is unique and contains $\mathbb{Q}$. From this point of view, the whole business of equivalence classes of Cauchy sequences can be seen as a proof of the existence of a field with the completeness and ordering properties. Turning back to spectra, we are saying that the only point we are asking the reader to trust us on is that there exists a category $\mathsf{Sp}$ having the properties we will attribute to it.
\par
Actually, $\mathsf{Sp}$ will not be an ordinary category, but an $(\infty,1)$-category. We address the interested reader to \cite{Lurie-htt} for a comprehensive treatment, and here content us with saying that for two given objects $X$ and $Y$ in $\mathsf{Sp}$, we don't have just a \emph{set} of morphisms, but a whole \emph{space} of morphisms $\mathsf{Sp}(X,Y)$ between $X$ and $Y$. This allows us to say not only that two morphisms are possibly equal, but that they are possibly homotopic (i.e., they lie in the same connected component of $\mathsf{Sp}(X,Y)$), or to talk of homotopies between homotopies between morphisms, and so on. Also, commutative diagrams of spectra will not be strictly commutative, but always commutative up to some given homotopy, which is part of the data defining a commutative diagram in an $\infty$-category. 

\begin{remark}
Calling the objects of $\mathsf{Sp}$ ``$0$-morphisms'', the elements in $\mathsf{Sp}(X,Y)$ ``$1$-morphisms, the homotopies between 1-morphisms ``$2$-morphisms'' and so on, one sees that in $\mathsf{Sp}$ one has $k$-morphisms for every $k\geq 0$. This motivates the terminology $\infty$-category. Moreover, since every homotopy is invertible up to a homotopy between homotopies, one sees that for $k>1$ every $k$-morphism in $\mathsf{Sp}$ is invertible up to $(k+1)$-morphisms. One indicates that 1 is the threshold for invertibility by saying that the $\infty$-category $\mathsf{Sp}$ is an $(\infty,1)$-category.
\end{remark}

Taking connected components of the hom-spaces one gets the homotopy category of spectra, usually denoted by $h\mathsf{Sp}$. As a matter of notation, hom-sets in the category 
$h\mathsf{Sp}$ will be denoted by $[-,-]$ rather than by $h\mathsf{Sp}(-,-)$, that is one writes
\[
[X,Y]:=\pi_0\mathsf{Sp}(X,Y),
\]
for any $X,Y$ in $\mathsf{Sp}$. For a morphism $f\colon X\to Y$ in $\mathsf{Sp}$, we will write
\[
f^*\colon \mathsf{Sp}(Y,Z)\to \mathsf{Sp}(X,Z)
\]
and
\[
f_*\colon \mathsf{Sp}(T,X)\to \mathsf{Sp}(T,Y)
\]
for the continuous maps between hom-spaces induced by precompostion and postcomposition with $f$, respectively. We will use the same symbols to denote the induced maps between hom-sets in $h\mathsf{Sp}$.
\begin{remark}
Even by the few informal lines above, one should deduce that $\infty$-categories are a nice context for doing homotopical constructions. In classical category theory, such a context is provided by the notion of a model structure on a category. Starting with a model structure on an ordinary category is indeed one of the most powerful ways to produce a rigorously defined $(\infty,1)$-category. In these cases one says that the $(\infty,1)$-category is \emph{presented} by the model structure. For instance, a rigorous definition of the $(\infty,1)$-category of spectra we are talking about is as the $(\infty,1)$-category presented by the standard model structure on the (ordinary) category of orthogonal spectra \cite{mmss}.
\end{remark}

Other two $(\infty,1)$-categories we will meet in this note are the $\infty$-category $\mathsf{Top}$ of (nice) topological spaces, presented by the classical (or Quillen) model structure, and whose homotopy category $h\mathsf{Top}$ is the usual homotopy category of (nice) topological spaces, and its pointed version $\mathsf{Top}_\ast$. The $\infty$-categories $\mathsf{Top}$, $\mathsf{Top}_\ast$ and $\mathsf{Sp}$ are related by the 
 functors (or, better, $\infty$-functors)
\[
	\begin{tikzcd}
\mathsf{Top} \arrow[r, "(-)_+"] & \mathsf{Top}_* 
\end{tikzcd},
\]
and
\[
\begin{tikzcd}
\mathsf{Top}_* \arrow[r, "\Sigma^\infty"] & \mathsf{Sp}
\end{tikzcd},
\]
where $(-)_+\colon \mathsf{Top}\to \mathsf{Top}_*$  is the functor adjoining a free basepoint, i.e., $X_+:=X\sqcup \ast$, and $\Sigma^\infty\colon \mathsf{Top}_*\to \mathsf{Sp}$ is the infinite suspension, mapping a pointed space $Y$ to the spectrum given by sequence of pointed spaces $\Sigma^\infty Y$ recursively defined by $(\Sigma^\infty Y)_0=Y$ and $(\Sigma^\infty Y)_{n+1}=\Sigma((\Sigma^\infty Y)_n)$ for any $n>0$.
The morphisms $\Sigma((\Sigma^\infty Y)_n)\to (\Sigma^\infty Y)_{n+1}$ giving $\Sigma^\infty Y$ the structure of a (sequential) spectrum are the identities.
\par
We will always look at (nice) topological spaces and pointed (nice) topological spaces as spectra via the sequence of functors
\[
	\begin{tikzcd}
\mathsf{Top} \arrow[r, "(-)_+"] & \mathsf{Top}_* \arrow[r, "\Sigma^\infty"] & \mathsf{Sp}
\end{tikzcd},
\]
that is, given a space $X$, we will denote by the same symbol $X$ its stabilization, i.e., the spectrum  $\Sigma^\infty_+X$, and similarly for pointed spaces. As a notable exception to this rule, when $X$ is the space $\ast$ consisting of a single point, we will use the classical notation $\SSS$ to denote the spectrum $\Sigma^\infty_+\ast$. 

It should be remarked that stabilization is not an embedding: by stabilising one loses all the unstable information about a space $X$. Nevertheless, as we will only be concerned with stable aspects, no confusion should arise in this note, and we'll find it an extremely convenient notation to denote by the same symbol both a space and its stabilization.
\begin{remark}
By definition, the spectrum $\mathbb{S}$ has $\mathbb{S}_0=\ast_+=\ast\sqcup \ast=S^0$ and so, inductively, $\mathbb{S}_n=S^n$ for any $n\geq 0$. It is called the \emph{sphere spectrum}.
\end{remark}

\begin{remark}
In an $(\infty,1)$-category, all universal properties are given ``up to homotopy'' (with the homotopies being part of the universal property). For instance, in an ordinary category $\mathsf{C}$ the property of an object $\emptyset$ of being initial is the fact that the hom-set $\mathsf{C}(\emptyset,X)$ is a singleton for every object $X$ in $\mathsf{C}$. In the $(\infty,1)$-categorical setting this is translated into requiring that the hom-space $\mathsf{C}(\emptyset,X)$ is ``a singleton up to homotopy'', i.e., into the requirement that $\mathsf{C}(\emptyset,X)$ is contractible. This means that there is not a unique initial morphism, but that the initial morphism is unique up to homotopies that are in turn unique up to higher homotopies, and so on. With this in mind, one continues to talk of ``the'' initial morphism $\emptyset \to X$. Similarly, the universal property of the pullback, declined into an $(\infty,1)$-categorical setting, becomes the fact that the commutative diagram
\[
\begin{tikzcd}
    \mathsf{C}(T,X\times_Z Y)\arrow[d]\arrow[r]&\mathsf{C}(T,X)\arrow[d]\\
    \mathsf{C}(T,Y)\arrow[r]&\mathsf{C}(T,Z)
    \end{tikzcd}
\]
is a \emph{homotopy pullback} of topological spaces for every $T$ in $\mathsf{C}$. We refer the reader to \cite{doeraene} for an introduction to homotopical pullbacks and pushouts in the category of (nice) topological spaces. 
\end{remark}

By saying that the $\infty$-category of spectra is stable one means it satisfies two very simple axioms (see \cite{lurie2012higher} for a comprehensive account):
\begin{enumerate}
    \item it has a zero object $\mathbf{0}$, i.e., an object that is at the same time initial and terminal;
    \item it has pullbacks and pushouts, and every pullback is a pushout, and vice versa.
\end{enumerate}
\begin{remark}
Since pullback and pushout diagrams coincide in a stable $(\infty,1)$-category, one sometimes calls them ``pullout'' diagrams. Pullout diagrams of the form
\[
\begin{tikzcd}
    X\arrow[d]\arrow[r,"f"]&Y\arrow[d,"g"]\\
    \mathbf{0}\arrow[r]&Z
    \end{tikzcd}
\]
are called fiber/cofiber sequences. In this case, one says that $X$ is the fiber of $g$ and $Z$ is the cofiber of $g$.
\end{remark}
\begin{remark}
One uses a special notation to denote the fiber of the initial morphism $\mathbf{0}\to X$ and the cofiber of the terminal morphism $X\to \mathbf{0}$. Namely one denotes them by the symbols $X[-1]$ and $X[1]$, respectively, so that one has the following defining pullout diagrams for these:
\[
\begin{tikzcd}
    X[-1]\arrow[d]\arrow[r]&\mathbf{0}\arrow[d]\\
    \mathbf{0}\arrow[r]&X
    \end{tikzcd}\quad;\qquad
\begin{tikzcd}
    X\arrow[d]\arrow[r]&\mathbf{0}\arrow[d]\\
    \mathbf{0}\arrow[r]&X[1]
    \end{tikzcd}\,.
\]
Comparison of the two pullout diagrams above immediately shows that in a stable $(\infty,1)$-category $\mathsf{C}$ the shifts
\[
[1],\, [-1]\colon \mathsf{C}\to \mathsf{C}
\]
are autoequivalences of $\mathsf{C}$ inverse to each other. One calls $[1]$ the \emph{shift} functor and, for any $n\in \mathbb{Z}$ one writes $[n]$ for $[1]^n$.
\end{remark}
\begin{remark}
The reader used to the language of triangulated categories will have found this sketchy description of stable $\infty$-categories echoing something familiar. And indeed the homotopy category $h\mathsf{C}$ of a stable $(\infty,1)$-category carries a natural structure of a triangulated category, where distinguished triangles are the image in $h\mathsf{C}$ of fiber/cofiber sequences in $\mathsf{C}$ and where the shift functor is induced by the shift functor of $\mathsf{C}$. It is a nice exercise to show that the two simple axioms of stable $(\infty,1)$-category imply the somehow less transparent ``octahedral axiom'' of triangulated categories.
\end{remark}
\begin{remark}
The shift functor in the $(\infty,1)$-category of spectra is induced by the suspension functor on pointed topological spaces. By this reason it is commonly denoted as $X\mapsto \Sigma X$ in algebraic topology textbooks. Here we prefer denoting it by $X\mapsto X[1]$ to stress that it is the shift functor. Since $[1]$ and $[-1]$ are inverse autoequivalences of $\mathsf{C}$, one has a natural homotopy equivalence $\mathsf{Sp}(X[1],Y)\cong\mathsf{Sp}(X,Y[-1])$ for any $X,Y$ in $\mathsf{Sp}$. The identification of $[1]$ with the suspension functor then identifies the negative shift $Y\mapsto Y[-1]$ with the loop space functor.
\end{remark}

\begin{remark} 
As in every stable $\infty$-category, the hom-sets $[X,Y]$ in the homotopy category $h\mathsf{Sp}$ have a natural structure of  abelian groups. Namely, since $\mathbf{0}$ is the zero object, we have natural homotopy equivalences $\mathsf{Sp}(X,\mathbf{0})\cong \ast$ and so, by definition of the negative shift functor, the hom-space $\mathsf{Sp}(X,Y[-1])$ is defined by the homotopy pullback
\[
\begin{tikzcd}
    \mathsf{Sp}(X,Y[-1])\arrow[d]\arrow[r]&\mathbf{0}\arrow[d]\\
    \mathbf{0}\arrow[r]&\mathsf{Sp}(X,Y)
    \end{tikzcd}.
\]
This gives a natural identification of $\mathsf{Sp}(X,Y[-1])$ with the loop space $\Omega\mathsf{Sp}(X,Y)$ based at the zero morphism $X\to Y$, i.e., at the morphism $X\to \mathbf{0}\to Y$. Iterating this, one gets an identification $\mathsf{Sp}(X,Y[-n])\cong \Omega^n\mathsf{Sp}(X,Y)$ for every $n\geq 1$. In particular, by using $Y\cong Y[2][-2]$, this gives the the natural identification
\[
[X,Y]=\pi_0\mathsf{Sp}(X,Y)=\pi_0\Omega^2\mathsf{Sp}(X,Y[2])=\pi_2\mathsf{Sp}(X,Y[2]).
\]
\end{remark}

\begin{definition}
Let $X$ be a spectrum. For any $n\in\mathbb{Z}$, one writes 
\[
\pi_n^{\mathsf{Sp}}(X):=[\mathbb{S}[n],X]
\]
and calls the abelian group $\pi_n^{\mathsf{Sp}}(X)$ the $n$-th homotopy group of $X$.
\end{definition}
\begin{remark}
Notice that if $X$ is a pointed space seen as a spectrum, then for $n\geq 0$, the homotopy group $\pi_n^{\mathsf{Sp}}(X)$ is \emph{not} the $n$-th homotopy group of $X$: it is the $n$-th \emph{stable} homotopy group of $X$.
\end{remark}

Another remarkable feature of the $\infty$-category of spectra is that it is symmetric monoidal with the smash product as tensor product and the sphere spectrum $\mathbb{S}$ as unit object. Following \cite{luriechromatic} we will denote the smash product of spectra with the symbol $\otimes$ rather than with the usual $\wedge$ to stress it is the monoidal product.  Similarly, we will use $\oplus$ instead of $\vee$ to denote the coproduct of spectra. Also $\mathsf{Top}$ and $\mathsf{Top}_*$ are symmetric monoidal, with tensor product given by the Cartesian product and by the smash product, respectively. Moreover,  both $(-)_+\colon \mathsf{Top}\to \mathsf{Top}_*$ and $\Sigma^\infty\colon \mathsf{Top}_*\to \mathsf{Sp}$ are monoidal functors \cite{mmss}. This fact has an immediate important consequence: spaces are special objects within spectra with respect to the monoidal structure. Namely, every object in $\mathsf{Top}$ is a coassociative cocommutative comonoid via the diagonal morphism $\Delta\colon X\to X\times X$ and the terminal morphism $X\to \ast$. Since $\Sigma^\infty_+$ is symmetric monoidal we have that a space $X$, seen as a spectrum, comes with natural 
 distinguished morphisms
\begin{align*}
 \Delta\colon  X&\to X\otimes X\\  
  \epsilon\colon X&\to \mathbb{S}
\end{align*}
making it a coassociative cocommutative comonoid in $\mathsf{Sp}$.

\begin{remark}
Tensor product of spectra is compatible with the shift functor: one has natural isomorphisms
\[
X\otimes (Y[1])\cong (X\otimes Y)[1] \cong (X[1])\otimes Y.
\]
These identify the shift functor with the tensor product with the object $\mathbb{S}[1]$.
\end{remark}

\begin{definition}
Algebra objects (or monoids) in $\mathsf{Sp}$, i.e., spectra $E$ endowed with morphisms 
\begin{align*}
  m\colon &E\otimes  E \to E\\  
  e\colon &\mathbb{S}\to E
\end{align*}
satisfying the usual unit and associativity conditions will be called \emph{ring spectra}. If, moreover, the multiplication is commutative (up to coherent homotopies) they will be called \emph{commutative ring spectra} or $E_\infty$-ring spectra.
\end{definition}
\begin{remark}
Since shift commutes with tensor product, if $E$ is a ring spectrum then $E[k]$ is an $E$-module for any $k\in \mathbb{Z}$.
\end{remark}

As in every monoidal category, if $X$ is a comonoid and $E$ is a monoid, then $[X,E]$ is a monoid with multiplication given by the composition
\[
\begin{tikzcd}
 {[X,E] \otimes [X,E]} \arrow[r, "\otimes"] & {[X\otimes X, E\otimes E]} \arrow[rr, "{(\Delta^*,m_*)}"] && {[X,E]}
\end{tikzcd}
\]
As this multiplication is compatible with the abelian group structure on $[X,E]$, when $X$ is a comonoid and $E$ is a monoid the hom-set $[X,E]$ has a canonical ring structure, which is commutative if both $E$ is commutative and $X$ is cocommutative. So, in particular, if $X$ is a space and $E$ is an $E_\infty$-ring spectrum, then $[X,E]$ is a commutative ring.

\begin{remark}
Thanks to the compatibility of the shift functor with the tensor product, if $X$ is a space then the direct sum
\[
\bigoplus_{n\in \mathbb{Z}}[X,E[n]]
\]
has a natural structure of graded commutative ring. This is called the graded $E$-cohomology ring of $X$.
\end{remark}

\begin{remark}\label{rem:push-and-pull-in-cohomology}
If $\psi\colon E\to F$ is a morphism of $E_\infty$-ring spectra, then 
\[
\psi_*\colon [X,E]\to [X,F]
\]
is a homomorphism of rings, for any comonoid $X$. Dually, if $\phi\colon X\to Y$ is a morphism of comonoids, then
\[
\phi^*\colon [Y,E]\to [X,E]
\]
is a homomorphism of rings, for any  $E_\infty$-ring spectrum $E$. In particular, since any continuous map of (nice) topological spaces is a comonoid map with respect to the comultiplication given by the diagonal embedding, any continuous map between spaces induces a pullback ring homomorphism $\phi^*\colon [Y,E]\to [X,E]$. As a special case of this, by taking the terminal morphism we see that for any space $X$ the ring $[X,E]$ comes with a natural ring homomorphism 
\[
\pi_0^{\mathsf{Sp}}(E)=[\mathbb{S},E]\to [X,E],
\]
making $[X,E]$ a $\pi_0^{\mathsf{Sp}}(E)$-algebra. One says that $\pi_0^{\mathsf{Sp}}(E)$ is the \emph{ring of coefficients} for the multiplicative cohomology theory defined by the $E_\infty$-ring spectrum $E$.
\end{remark}

\begin{example}
Let $A$ be an abelian ring. If $E=HA$, the Eilenberg--Mac Lane $E_\infty$-ring spectrum defining singular cohomology with coefficients in $A$, one has $\pi_0^{\mathsf{Sp}}(HA)=A$, so that the coefficients of singular cohomology are indeed the coefficients in the sense of Remark \ref{rem:push-and-pull-in-cohomology}.
\end{example}

\begin{remark}
Since shift commutes with tensor product, 
if $X$ is a comonoid in $\mathsf{Sp}$ and $Y$ is a comodule over $X$, i.e., we have a morphism
\[
\rho\colon Y\to X\otimes Y
\]
making the obvious diagrams commute, then for any $n\in \mathbb{Z}$ also $Y[n]$ is a $X$-comodule. In particular, for any $n$ the hom-set $[Y[n],E]$ is an $[X,E]$-module via the $[X,E]$-action
\[
\begin{tikzcd}
 {[X,E] \otimes [Y[n],E]} \arrow[r, "\otimes"] & {[X\otimes Y[n], E\otimes E]} \arrow[rr, "{(\rho[n]^*,m_*)}"] && {[Y[n],E]}.
\end{tikzcd}
\]
A particular instance of this construction is obtained by taking $X=\mathbb{S}$ and $\rho\colon Y\xrightarrow{\sim} \mathbb{S}\otimes Y$ the natural isomorphism, for any spectrum $Y$. This way, one sees that $[Y[n],E]$ is a $[\mathbb{S},E]$-module, i.e., a $\pi_0^{\mathsf{Sp}}(E)$-module, for any $n\in \mathbb{Z}$. In particular, if $Y$ is a space, this module structure coincides with the one induced by the ring homomorphism $\pi_0^{\mathsf{Sp}}(E)\to [Y,E]$ described in Remark \ref{rem:push-and-pull-in-cohomology}.
\end{remark}

\begin{remark}
If $X$ and $Y$ are two (nice) topological spaces and $E$ is an $E_\infty$-ring spectrum, then the commutative diagram of spaces
\[
\begin{tikzcd}
 {X\times Y} \arrow[r]\arrow[d] & {X} \arrow[d] \\
 {Y}\arrow[r] & {\ast}
\end{tikzcd}
\]
induces the commutative diagram of commutative rings
\[
\begin{tikzcd}
 {[X\otimes Y,E]}  & {[X,E]} \arrow[l] \\
 {[Y,E]}\arrow[u] & {[\SSS,E]} \arrow[l]\arrow[u]
\end{tikzcd}
\]
and so, by the universal property of the tensor product of commutative rings, a morphism of rings
\[
\otimes_E\colon [X,E]\otimes_{\pi_0^{\mathsf{Sp}}(E)}[Y,E]\to [X\otimes Y,E].
\]
One sees that $\otimes_E$ is induced by the external multiplication
\[
\begin{tikzcd}
 {[X,E] \otimes [Y,E]} \arrow[r, "\otimes"] & {[X\otimes Y, E\otimes E]} \arrow[rr, "{m_*}"] && {[X\otimes Y,E]},
\end{tikzcd}
\]
by factoring the latter through $[X,E] \otimes [Y,E]\to [X,E] \otimes_{\pi_0^{\mathsf{Sp}}(E)} [Y,E]$.
\end{remark}

\begin{definition}
We will say that the $E_\infty$-ring spectrum $E$ is \emph{periodic of period $k$} or simply \emph{k-periodic} if we are given an isomorphism of $E$-modules $\phi:E\to E[-k]$. The composition
\[
\beta_E\colon \mathbb{S}\to E\xrightarrow{\phi} E[-k]
\]
is called the \emph{Bott element} of the $k$-periodic $E_\infty$-ring spectrum $(E,\phi)$.
\end{definition}
\begin{remark}
At the level of homotopy classes, the Bott element $\beta_E$ is an element in $\pi^{\mathsf{Sp}}_k(E)$. 
\end{remark}
\begin{remark}
Since $\phi$ is an isomorphism, the Bott element is invertible: there exists an element $\beta_E^{-1}\colon \SSS\to E[k]$ with $\beta_E\beta_E^{-1}=\beta_E^{-1}\beta_E=e\colon \SSS\to E$. For any spectrum $X$, the multiplication by the Bott element induces a natural isomorphism of abelian groups $[X,E]\xrightarrow{\beta_E\cdot} [X[k],E]$. In particular, the choice of an element $\xi \in [X[-k],E]$ is equivalent to the choice of an element $\beta_E\xi \in [X,E]$ via the periodicity isomorphisms. If $X$ is a space, by the universal property of polynomial rings the latter are in bijection with ring homomorphisms $\pi^{\mathsf{Sp}}_0(E)[u] \to [X,E]$ extending the canonical homomorphism $\epsilon^*\colon [\mathbb{S},E]\to [X,E]$, via the association $u \mapsto \beta_E\xi$.
\end{remark}
\begin{remark}
If $(E,\phi)$ is a $k$-periodic $E_\infty$-ring spectrum, then the isomorphism $\phi$ is the multiplication by the Bott element, i.e., $\phi$ is the composition
\[
\begin{tikzcd}
{E\cong E\otimes \SSS } \arrow[r, "{\mathrm{id}\otimes \beta}"]& {E\otimes E[k]\cong (E\otimes E)[-k]}\arrow[r, "{m[-k]}"] & {E[-k]}.
\end{tikzcd}
\]
Therefore, one can equivalently define a $k$-periodic $E_\infty$-ring spectrum as an $E_\infty$-ring spectrum endowed with a morphism $\beta_E\colon \mathbb{S}[k]\to E$ such that multiplication by $\beta_E$ induces an isomorphism of $E$-modules $E\to E[-k]$.
\end{remark}

\begin{definition} An $E_\infty$-ring spectrum
$E$ is called \emph{even $2$-periodic} if it is $2$-periodic and $\pi_{2k+1}^{\mathsf{Sp}}(E) = 0, \forall k \in \Z$.
\end{definition}

\begin{example}\label{example:chern-bott}
The first and most natural example of an even $2$-periodic cohomology theory is complex $K$-theory $KU$. The Bott element $\beta_{KU}\in\pi_2^{\mathsf{Sp}}(KU)=K^0(S^2)$ can be identified with (the class of the) virtual line bundle $\mathbf{1}_\C-L^{-1}$ in the $K$-theory of $S^2$, where $L=\mathcal{O}_{\mathbb{P}^1\C}(1)$
is the universal complex line bundle over the complex projective line $\mathbb{P}^1\C\cong S^2$ and $\mathbf{1}_\C$ is the trivial rank 1 complex vector bundle $S^2\times \C\to S^2$.

Another even $2$-periodic cohomology theory we will come back to later is even 2-periodic rational singular cohomology $HP_{\mathrm{ev}}\Q:=\bigoplus_{i\in\Z}H\Q[2i]$. Here, as usual, $H\Q$ denotes the Eilenberg-Mac Lane spectrum of the abelian group $(\Q,+)$, so that $[X,H\Q[2i]]=H^{2i}(X;\Q)$ for any (nice) topological space $X$. Its homotopy groups are given by
\[
\pi_i^{\mathsf{Sp}}(H\Q)=\begin{cases}
\Q \text{ if } i=0\\
0 \text{ otherwise.}
\end{cases}
\]
The Bott element $\beta_{HP_{\mathrm{ev}}\Q}$ of $HP_{\mathrm{ev}}\Q$ can be identified with $1\in \pi_0^{\mathsf{Sp}}(H\Q)$ via 
\[
\pi_2^{\mathsf{Sp}}(HP_{\mathrm{ev}}\Q)=\prod_{i\in\Z}\pi_{2-2i}^{\mathsf{Sp}}(H\Q)\beta^i=\pi_0^{\mathsf{Sp}}(H\Q)\beta=\Q\beta,
\]
where $\beta$ is a degree $-2$ formal variable used to keep track of the degree shiftings. In other words, the Bott element $\beta_{HP_{\mathrm{ev}}\Q}$ of $HP_{\mathrm{ev}}\Q$ is naturally identified with the formal variable $\beta$ used in forming the even 2-periodic rational singular cohomology of a space $X$:
\[
HP^k_{\mathrm{ev}}(X;\mathbb{Q})=\bigoplus_{i\in \mathbb{Z}}H^{k+2i}(X;\Q)\beta^i.
\]
Equivalently, the Bott element $\beta_{HP_{\mathrm{ev}}\Q}$ is the fundamental class of $S^2$ via
\[
\pi_2^{\mathsf{Sp}}(HP_{\mathrm{ev}}\Q)=\prod_{i\in\Z}\pi_{2}^{\mathsf{Sp}}(H\Q[2i])=\pi_2^{\mathsf{Sp}}(H\Q[2])=\Q\beta.
\]
Notice that at the level of $\pi_2^{\mathsf{Sp}}$'s, the Chern character $\mathrm{ch}\colon KU \to HP_{\mathrm{ev}}\Q$ gives
$\pi_2^{\mathsf{Sp}}(\mathrm{ch})\colon \pi_2^{\mathsf{Sp}}(KU)\to \pi_2^{\mathsf{Sp}}(HP_{\mathrm{ev}}\Q)$ mapping $\mathbf{1}_\C-L^{-1}$ to $1-e^{\beta c_1(L^{-1})}=\beta c_1(L)$. As the first Chern class of the universal line bundle on $\mathbb{P}^1\C$ represents the fundamental class of $\mathbb{P}^1\C$ in singular cohomology (with $\mathbb{Z}$- and so also) with $\mathbb{Q}$-coefficients, we see that the Chern character maps the Bott element of $KU$ to the Bott element of $HP_{\mathrm{ev}}\Q$.
\end{example}

We will also be prominently considering the category $\mathsf{VectBun}_{\mathbb{R}}$ of real vector bundles over (nice) topological spaces, whose objects are vector bundles $V\to X$ and whose morphisms are commutative diagrams
\[
\begin{tikzcd}
V \arrow[r, "{\tilde{f}}"] \arrow[d] & W \arrow[d] \\
X \arrow[r, "f"]             & Y                  
\end{tikzcd}
\]
where $\tilde{f}$ is fibrewise linear. The category $\mathsf{VectBun}_{\mathbb{R}}$ is symmetric monoidal with monoidal product given by $(V\to X)\otimes (W\to Y)=(V\boxplus W\to (X\times Y))$, where $(V\boxplus W)_{(x,y)}=V_x\oplus W_y$, and unit object the zero vector bundle over the point. 
\begin{notation}
With $K$-theory in mind, in what follows we will usually denote the direct sum of vector spaces and of vector bundles by $+$ instead than by $\oplus$.
\end{notation}

\begin{definition}
The \emph{Thom space} $\mathrm{Th}(V\to X)$ of a real vector bundle $V\to X$ is defined as the homotopy pushout
\[
\begin{tikzcd}
V\setminus X \arrow[r] \arrow[d] & V \arrow[d] \\
\ast \arrow[r, "f"]             & \mathrm{Th}(V\to X)   \end{tikzcd}
\]
where $V\setminus X$ denotes the total space of the vector bundle $V$ minus the copy of $X$ inside it given by the zero section.
\end{definition}
Notice that, by its very definition, $\mathrm{Th}(V\to X)$ is a pointed space. The basepoint of $\mathrm{Th}(V\to X)$ is customary denoted by $\infty$: it is the ``point at infinity'' of the total space of the vector bundle $V\to X$ in the ``vertical directions''.
\begin{remark}
A more concrete description of the Thom space $\mathrm{Th}(V\to X)$ is obtained as follows. In order to compute the homotopy pushout defining it, one replaces the inclusion $V\setminus X \hookrightarrow V$ with the homotopy equivalent cofibration $V\setminus B(V)\hookrightarrow V$, where $B(V)$ is the open unit disk bundle of $V$ for some chosen Riemannian metric. By retracting on the closed unit disk bundle, this is in turn equivalent to the cofibration  $S(V)\hookrightarrow D(V)$, where $S(V)$ denotes the unit sphere bundle and $D(V)$ the  closed unit disk bundle of $V$, respectively. The Thom space of $V$ is then realized as the quotient space $D(V)/S(V)$, with base point the equivalence class $S(V)$. One sees from this description that when the base space $X$ of the vector bundle $V\to X$ is compact, the Thom space $\mathrm{Th}(V\to X)$ reduces to the one-point compactification of the total space $V$, pointed at the point at infinity.
\end{remark}
\begin{remark}
The Thom space construction,
\[
\mathrm{Th}\colon {\mathsf{VectBun}_{\mathbb{R}}}\to \mathsf{Top}_*
\]
is a symmetric monoidal functor: one has a natural isomorphism
\[
\mathrm{Th}(V\boxplus W\to (X\times Y))\cong \mathrm{Th}(V\to X)\wedge \mathrm{Th}(W\to Y).
\]
\end{remark}
The above remark immediately leads to the following
\begin{definition}
The \emph{Thom spectrum} functor $(V\to X)\mapsto X^V$ is the symmetric monoidal functor ${\mathsf{VectBun}_{\mathbb{R}}} \to \mathsf{Sp}$
given by the composition
\[
	\begin{tikzcd}
{\mathsf{VectBun}_{\mathbb{R}}} \arrow[r, "\mathrm{Th}"] & \mathsf{Top}_* \arrow[r, "\Sigma^\infty"] & \mathsf{Sp}
\end{tikzcd}.
\]
\end{definition}
\begin{example}\label{ex:ast-R}
If $V$ is the rank zero vector bundle $\mathbf{0}=X\times \mathbb{R}^0\to X$, then $V\setminus X=\emptyset$ and so $\mathrm{Th}(\mathbf{0}\to X)=X_+$. This implies $X^{\mathbf{0}}=X$, where as usual on the right hand side we are writing $X$ for the suspension spectrum $\Sigma^\infty_+X$. Another simple example is the following.
As $\mathrm{Th}(\mathbb{R}\to \ast)=(S^1,\infty)=\Sigma_+(\ast)$, we have $\ast^\mathbb{R}=\SSS[1]$.
\end{example}

\begin{remark}\label{rem:positive-shift}
In the categorical spirit of this note, the actual definition of the Thom spectrum functor is less important than its properties. What the reader should keep in mind is that to a real vector bundle $V\to X$ is associated a spectrum $X^V$ in such a way that $(X\times Y)^{V\boxplus W}\cong X^V\otimes Y^W$, and that $\ast^\mathbb{R}=\SSS[1]$. For instance, from these two properties one derives
\[
X^{V+ \mathbf{1}}=(X\times\{\ast\})^{V\boxplus \mathbf{1}}=X^V\otimes \SSS[1]=X^V[1],
\]
where we have written $\mathbf{1}$ for the trivial rank 1 vector bundle $\mathbf{1}:=X\times \mathbb{R}\to X$. 
More generally, writing $\mathbf{n}:=X\times \mathbb{R}^n\to X$ for the trivial rank $n$ bundle, one has $X^{V+ \mathbf{n}}=X^V[n]$ for any nonnegative $n$. 
\end{remark}

The other categorical property of Thom spectra that we will need is that
Thom spectra are comodules over their bases: for every (nice) topological space $X$ and every vector bundle $V\to X$ the Thom spectrum $X^V$ has a canonical $X$-comodule structure
\[
\Delta_V\colon X^V \to X\otimes X^V.
\]
This is simply obtained by applying the Thom spectrum functor to the morphism in ${\mathsf{VectBun}_{\mathbb{R}}}$
given by the commutative diagram
\[
\begin{tikzcd}
V \arrow[r, "{(0,\mathrm{id}_V)}"] \arrow[d] & \mathbf{0}\boxplus V \arrow[d] \\
X \arrow[r, "\Delta"]             & X\times X       \end{tikzcd},
\]
where $\Delta\colon X\to X\times X$ is the diagonal embedding.
This construction is natural: if $(f,\tilde{f})\colon (V\to X) \to (W\to Y)$ is a morphism in $\mathsf{VectBun}_{\mathbb{R}}$,then we have a commutative diagram
\[
\begin{tikzcd}
{X^{V}} \arrow[r, "{\Delta_{V}}"] \arrow[d, "{f^V}"] & {X\otimes X^{V}} \arrow[d, "{f\otimes f^V}"] \\
{Y^W} \arrow[r, "{\Delta_V}"]             & {Y\otimes Y^W}                  
\end{tikzcd}
\]
In other words, the construction of Thom spectra gives a functor
\begin{align*}
\mathsf{VectBun}_{\mathbb{R}} &\to \mathsf{Comod}(\mathsf{Sp})\\
(V\to X)& \mapsto (X,X^V),
\end{align*}
where for any monoidal category $\mathsf{C}$ we denote by $\mathsf{Comod}(\mathsf{C})$ the category whose objects are pairs $(A,M)$, where $A$ is a coalgebra object in $\mathsf{C}$ and $M$ is an $A$-comodule, and morphisms from $(A,M)$ to $(B,N)$ are commutative diagrams
\[
\begin{tikzcd}
{M} \arrow[r, "{\Delta_M}"] \arrow[d, "\hat{f}"] & {A\otimes M} \arrow[d, "{f\otimes \hat{f}}"] \\
{N} \arrow[r, "{\Delta_N}"]             & {B\otimes N}                  
\end{tikzcd}
\]
where $f\colon A\to B$ is a morphism of coalgebras.

This gives a natural $[X,E]$-module structure on the $E$-cohomology of $X^V$.

\begin{remark}\label{rem:on-thom-spectra}
If $f\colon X\to Y$ is a morphism of (nice) topological spaces and $V$ is a vector bundle on $Y$ we have a naturally induced morphism 
\[
(f,f^V)\colon (X,X^{f^*V}) \to (Y,Y^V)
\]
in $\mathsf{Comod}(\mathsf{Sp})$.
In particular, given two vector bundles $V$ and $W$ over a space $X$, we have a natural morphism
\[
(\Delta,\Delta^{V\boxplus W})\colon (X,X^{V+W}) \to (X\otimes X,X^V\otimes X^W)
\]
as a consequence of the pullback isomorphism $V+W=\Delta^*(V\boxplus W)$, where $\Delta\colon X\to X\times X$ is the diagonal embedding.
\end{remark}

\begin{remark}\label{rem:virtual-bundles} The category  $\mathsf{VectBun}_{\mathbb{R}}$ has a positive shift given by tensoring with the trivial rank 1 line bundle $\mathbb{R}\to \ast$. As we have seen in Remark \ref{rem:positive-shift}, the Thom spectrum functor changes the positive shift on $\mathsf{VectBun}_{\mathbb{R}}$ into the shift functor on $\mathsf{Sp}$. As a consequence, we have a well defined notion of Thom spectrum of a virtual bundle $V=V_0-V_1$ on $X$, given by $X^{V}=X^{V_0+\tilde{V}_1}[-n]$, where $\tilde{V}_1$ is a vector bundle on $X$ such that $V_1+ \tilde{V}_1$ is the trivial rank $n$ vector bundle, for some $n$. As a shifted comodule is again a comodule, we see that $X^V$ is naturally an $X$-comodule for any virtual vector bundle $V$ on $X$.
\end{remark}

\begin{example}\label{ex:collapse-map}
Let $X$ be a compact smooth manifold, and let $i\colon X\hookrightarrow \mathbb{R}^n$ be an embedding. If $\nu$ denotes the normal bundle to $X$ in $\mathbb{R}^n$, then $TX+\nu$ is the trivial rank $n$ vector bundle over $X$, so that $X^{-TX}=X^\nu[-n]$. By considering $\mathbb{R}^n$ inside its one-point compactification $S^n$, one can look at $i$ as an embedding $i\colon X\hookrightarrow S^n$. A tubular neighborhood $U$ of $X$ in $S^n$ is homeomorphic to the open unit disk bundle $B(\nu)$ of the normal bundle $\nu$. Mapping all points of $S^n$ outside $U$ to the point at infinity in the Thom space of $\nu$ while keeping the points of $U$ fixed defines a map of pointed spaces $
S^n\to \mathrm{Th}(\nu)$. Applying $\Sigma^\infty$, this defines a map of spectra
$\SSS[n] \to X^\nu$ 
and so, equivalently, a map of spectra
\[
\gamma^{}_{\mathrm{PT}}\colon \SSS\to X^{-TX}.
\]
The latter is, up to homotopy, independent of all the choices involved in its definition. It is called the \emph{Pontryagin--Thom collapse map}. 
\end{example}

\section{Unde idem dicebant esse dualitatem et speciem dualitatis}\label{sec:integration}
A second main feature of the category of spectra we will use is that it is monoidally closed, i.e., for any object $X$ the functors
\begin{align*}
-\otimes X\colon \mathsf{Sp}&\to   \mathsf{Sp}\\
X\otimes -\colon \mathsf{Sp}&\to   \mathsf{Sp}
\end{align*}
have right adjoints. We we will write
\[
F(X,-)\colon \mathsf{Sp}\to   \mathsf{Sp}
\]
for the right adjoint of the right multiplication functor $-\otimes X$, i.e., we have natural isomorphisms
\[
\mathsf{Sp}(Y\otimes X,Z)\cong \mathsf{Sp}(Y,F(X,Z))
\]
for any spectra $X,Y,Z$.

\begin{definition}
The \emph{Alexander--Spanier dual} $DX$ of a spectrum $X$ is 
\[
DX:=F(X,\mathbb{S}),
\]
i.e., it is the spectrum characterized by 
\[
\mathsf{Sp}(Y,DX)\cong
\mathsf{Sp}(Y\otimes X,\mathbb{S}),
\]
naturally, for any spectrum $Y$.
\end{definition}
It is immediate from the definition that $D$ is a contravariant functor $
D\colon \mathsf{Sp}\to \mathsf{Sp}^{\mathrm{op}}$.
Moreover, as $\SSS$ is the unit object, we have a natural isomorphism $
D\mathbb{S}\cong \mathbb{S}$.
In particular, for any space $X$ we have a distinguished morphism
\[
\varphi\colon \mathbb{S}\to DX,
\]
which is the image under the duality functor $D$ of the distinguished morphism $X\to \mathbb{S}$.

We have already noticed (nice) topological spaces are special among spectra. A somehow surprising fact is that, even from the spectral point of view, compact smooth manifolds are special among (nice) topological spaces. We have the following
\begin{theorem}[Atiyah]\cite{doi:10.1112/plms/s3-11.1.291}\label{theorem:DX-thom} Let $X$ be a compact smooth manifold. Then there is an isomorphism of spectra under $\mathbb{S}$
\[
\begin{tikzcd}
 & \mathbb{S} \arrow[rd,"\varphi"]\arrow[ld, "\gamma^{}_{\mathrm{PT}}"']\\
 X^{-TX} \arrow[rr, "{\sim}"] && DX
\end{tikzcd}
\]
where $TX$ is the tangent bundle of $X$, $X^{-TX}$ is the \emph{Thom spectrum} of the virtual bundle $-TX$, and $\gamma^{}_{\mathrm{PT}}\colon \mathbb{S}\to X^{-TX}$ is the Pontryagin--Thom collapse map from Example \ref{ex:collapse-map}.
\end{theorem}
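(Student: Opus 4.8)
The plan is to deduce the statement from Spanier--Whitehead duality in $\mathsf{Sp}$, by exhibiting $X$ and $X^{-TX}$ as an explicit dual pair with evaluation and coevaluation assembled out of Pontryagin--Thom collapses and the comultiplication $\Delta$ of $X$, and then tracking the two distinguished maps out of $\SSS$. First I would fix a smooth embedding $X\hookrightarrow\R^N$ for $N\gg 0$ (Whitney), with normal bundle $\nu\to X$ and closed tubular neighbourhood $U\subset\R^N$. Since $TX\oplus\nu\cong\underline{\R^N}$, we have $-TX=\nu-\underline{\R^N}$ as virtual bundles, hence, by the definition of the Thom spectrum of a virtual bundle recalled above, a canonical identification $X^{-TX}\simeq X^{\nu}[-N]$. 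Under it the map $\SSS\to X^{-TX}$ appearing in the statement is precisely the $(-N)$-fold desuspension of the Pontryagin--Thom collapse $\ast^{\R^N}=\SSS[N]\to X^{\nu}$ that crushes the complement of $U$ inside $S^N$; so half of the ``under $\SSS$'' datum is already built in.

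Next I would write down the duality data. For the coevaluation, post-compose the collapse with the canonical $X$-comodule structure of the Thom spectrum,
\[
\mathrm{coev}\colon\ \SSS[N]\ \xrightarrow{\ \mathrm{PT}\ }\ X^{\nu}\ \xrightarrow{\ \Delta_{\nu}\ }\ X\otimes X^{\nu},
\]
which after desuspension is a map $\SSS\to X\otimes X^{-TX}$. For the evaluation, use that the Thom spectrum functor is symmetric monoidal, so $X^{\nu}\otimes X\cong(X\times X)^{\nu\boxplus\underline 0}$ (regarding $X$ as the Thom spectrum of the rank-zero bundle), then collapse $X\times X$ onto a tubular neighbourhood of the diagonal $\Delta\colon X\hookrightarrow X\times X$, whose normal bundle is $TX$; using $\Delta^*(\nu\boxplus\underline 0)=\nu$ and finally $\epsilon$ this produces
\[
\mathrm{ev}\colon\ X^{\nu}\otimes X\ \cong\ (X\times X)^{\nu\boxplus\underline 0}\ \longrightarrow\ X^{\nu\oplus TX}\ =\ X^{\underline{\R^N}}\ =\ X[N]\ \xrightarrow{\ \epsilon[N]\ }\ \SSS[N],
\]
i.e.\ a map $X^{-TX}\otimes X\to\SSS$.

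The heart of the proof is to check that $(\mathrm{ev},\mathrm{coev})$ satisfies the two triangle identities, so that $X^{-TX}$ is genuinely a dual of $X$; equivalently, since a compact manifold is a finite complex (so $X$ is dualizable in $\mathsf{Sp}$), it is enough to verify that the mate $X^{-TX}\to F(X,\SSS)=DX$ of $\mathrm{ev}$ is an equivalence. Both formulations come down to a single geometric input: the composite of the two collapses---first along $X\hookrightarrow\R^N$, then along the diagonal---can be straightened, by a homotopy supported near $X$, to the identity of $X[N]$. This tubular-neighbourhood straightening is the one genuinely nonformal step, and I expect it to be the main obstacle; rather than redo it I would quote it from Atiyah's paper \cite{doi:10.1112/plms/s3-11.1.291}. (Alternatively one may bypass the straightening and argue homologically: $X^{-TX}$ and $DX$ are finite, hence bounded-below, spectra, so the mate of $\mathrm{ev}$ is an equivalence as soon as it induces an isomorphism on $H_*(-;\Z)$, which is the Thom isomorphism for $\nu$ together with Poincaré duality on $X$; but this variant still needs the geometric step below for compatibility with $\SSS$.)

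Finally, for ``under $\SSS$'': the equivalence $X^{-TX}\xrightarrow{\sim}DX$ is the mate of $\mathrm{ev}$, so precomposing it with the Pontryagin--Thom collapse $\SSS\to X^{-TX}$ gives the map $\SSS\to DX$ whose mate is the composite
\[
X\ =\ \SSS\otimes X\ \xrightarrow{\ \mathrm{PT}\otimes\mathrm{id}_X\ }\ X^{-TX}\otimes X\ \xrightarrow{\ \mathrm{ev}\ }\ \SSS .
\]
By the same straightening computation this composite is $\epsilon\colon X\to\SSS$, and since $\varphi=D\epsilon$ is by definition the mate of $\epsilon$, the triangle in the statement commutes. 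Thus everything reduces to the geometric straightening lemma; all the rest is formal manipulation with the monoidal and comodule structures set up in the previous sections.
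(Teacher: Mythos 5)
The paper does not actually prove this statement: it imports it wholesale from Atiyah's paper \cite{doi:10.1112/plms/s3-11.1.291}, so there is no internal proof to compare against. Your sketch is the standard duality-data argument for Atiyah duality---embedding collapse as coevaluation, diagonal collapse as evaluation, triangle identities and the compatibility under $\SSS$ checked by the tubular-neighbourhood straightening---which is essentially the argument of the cited reference, and since you, like the paper, delegate that one genuinely geometric step to Atiyah, your route matches the paper's treatment rather than diverging from it.
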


From Theorem \ref{theorem:DX-thom}, we get
\begin{corollary}
Let $X$ be a compact smooth manifold. Then $DX$ carries a natural $X$-comodule structure. In particular, if $E$ is a ring spectrum, then $[DX,E]$ carries a natural $[X,E]$-module structure.
\end{corollary}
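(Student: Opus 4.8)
The plan is to transport the $X$-comodule structure on the Thom spectrum $X^{-TX}$ across Atiyah's equivalence. We have already observed (in the discussion preceding this section, on Thom spectra as comodules over their bases, and its extension to virtual bundles) that for any virtual vector bundle $V$ on a space $X$ the Thom spectrum $X^V$ is canonically an $X$-comodule via a comultiplication $\Delta_V\colon X^V\to X\otimes X^V$. Applying this to $V=-TX$ gives the comodule structure map $\Delta_{-TX}\colon X^{-TX}\to X\otimes X^{-TX}$. By Theorem~\ref{theorem:DX-thom} there is an isomorphism of spectra $a\colon X^{-TX}\xrightarrow{\sim} DX$ (compatible with the maps from $\SSS$, though that compatibility is not needed here). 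First I would simply define the comodule structure on $DX$ by conjugating: set $\Delta_{DX}:=(\id_X\otimes a)\circ \Delta_{-TX}\circ a^{-1}\colon DX\to X\otimes DX$.

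Next I would check that $(DX,\Delta_{DX})$ really is an $X$-comodule, i.e.\ that the counit and coassociativity diagrams commute. This is immediate: since $a$ is an isomorphism, the two relevant diagrams for $\Delta_{DX}$ are obtained from the (already commuting) diagrams for $\Delta_{-TX}$ by pre- and post-composing with the isomorphisms $a^{\pm 1}$ and $\id_X\otimes a$, $\id_X\otimes\id_X\otimes a$; commutativity is preserved under such conjugation in any monoidal category. So the comodule axioms transfer for free. One should note that, strictly speaking, the structure obtained depends on the chosen isomorphism $a$, but Atiyah's $a$ is the canonical one, so "natural" here means "with respect to this canonical identification."

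For the second assertion, I would invoke the general mechanism recorded earlier in the excerpt: whenever $X$ is a comonoid in $\mathsf{Sp}$ and $Y$ is an $X$-comodule with structure map $\rho\colon Y\to X\otimes Y$, and $E$ is a (ring) spectrum, the hom group $[Y,E]$ acquires an $[X,E]$-module structure via the composite
\[
\begin{tikzcd}
{[X,E]\otimes[Y,E]} \arrow[r, "\otimes"] & {[X\otimes Y,E\otimes E]} \arrow[rr, "{(\rho^*,\,m_*)}"] && {[Y,E]}.
\end{tikzcd}
\]
Specializing $Y=DX$ with $\rho=\Delta_{DX}$ yields the desired $[X,E]$-module structure on $[DX,E]$. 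Equivalently, one can transport the module structure on $[X^{-TX},E]$ along the isomorphism $a^*\colon [DX,E]\xrightarrow{\sim}[X^{-TX},E]$, which is an isomorphism of $[X,E]$-modules by construction.

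There is no real obstacle here: the content is entirely in Theorem~\ref{theorem:DX-thom}, and the Corollary is a formal consequence. The only mild subtlety worth flagging is the dependence on the choice of equivalence $a$ (and, relatedly, making sure one uses the comodule structure on $X^{-TX}$ coming from the virtual-bundle extension of $\Delta_V$, which was set up precisely so that shifted comodules remain comodules); once that is acknowledged, the proof is a two-line diagram chase plus an appeal to the already-established $[X,E]$-module construction.
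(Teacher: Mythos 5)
Your proposal is correct and follows the same route the paper intends: the paper treats the corollary as an immediate consequence of Theorem~\ref{theorem:DX-thom}, transporting the canonical $X$-comodule structure on the Thom spectrum $X^{-TX}$ (already set up for virtual bundles in the introductory section) across Atiyah's identification, and then invoking the general fact that $[Y,E]$ is an $[X,E]$-module whenever $Y$ is an $X$-comodule and $E$ a ring spectrum. Your extra remarks on the dependence on the chosen equivalence and on the conjugation argument are fine but not needed beyond what the paper leaves implicit.
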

We can now give the main definition of this Section:
\begin{definition}\label{defEori}
Let $E$ be a commutative ring spectrum. A compact $n$-dimensional smooth manifold $X$ is \emph{$E$-orientable} if $[DX[n],E]$ is an $[X,E]$-module of rank 1. An \emph{$E$-orientation} of $X$ is an isomorphism of $[X,E]$-modules $[X,E]\to [DX[n],E]$. An \emph{$E$-oriented} compact $n$-dimensional smooth manifold $X$ is a pair $(X,\sigma)$ where $X$ is a compact $n$-dimensional smooth manifold and $\sigma:[X,E]\to[DX[n],E]$ is an isomorphism of $[X,E]$-modules.
\end{definition}
Notice that, if $(X,\sigma)$ is an $E$-oriented compact $n$-dimensional smooth manifold, then the datum of $\sigma$ is equivalent to the datum of a generator $\tau = \tau_{(X,\sigma)}$ of $[DX[n],E]$ as an $[X,E]$-module. The element $\tau$ will be called the \emph{Thom} class of the $E$-orientation of $X$. 
\begin{definition}\label{integration}
For $(X,\sigma)$ an $E$-oriented compact $n$-dimensional smooth manifold we have a naturally defined \emph{integration map}

\[
\int_{(X,\sigma)}^E\colon [X,E] \to [\mathbb{S}[n], E]
\]

\noindent given by the composition

\[
\begin{tikzcd}
\left[X,E\right] \arrow[r,"\sigma"] & \left[DX[n],E\right]   \arrow[r,"\varphi^*"] & \left[\SSS[n],E\right]
\end{tikzcd}
\]

\end{definition}

\begin{remark}
When $E=\bigoplus_{i\in\Z}H\mathbb{Z}[i]$, where $H\mathbb{Z}$ is the Eilenberg-Mac Lane spectrum  representing integral cohomology, the datum of an $E$-orientation of a compact smooth manifold $X$ is equivalent to the datum of an orientation of $X$ in the sense of differential geometry, and the integration map defined above is naturally identified with integration in singular cohomology.
\end{remark}

\section{Cubicula et bibliothec\ae\ ad orientem spectare debent}
So far we have been considering $E$-orientations of compact smooth manifolds, defined in terms of Thom spectra of opposite tangent bundles. It will be convenient to generalize this construction to arbitrary real virtual vector bundles (recall from Remark \ref{rem:virtual-bundles} that we have a well defined notion of Thom spectrum $X^V$ for any real virtual vector bundle $V$ over a (nice) topological space $X$).
\begin{definition}
Let $V$ be a rank $r$ virtual real vector bundle over $X$. $V$ is is said to be $E$-{{\it orientable}} if $[X^V[-r], E]$ is 
an $[X,E]$-module of rank $1$. An $E$-{{\it orientation}} of $V$ is the choice of an isomorphism $\sigma_V \colon [X,E] \xrightarrow{\sim} [X^V[-r], E]$. 
\end{definition}
\begin{remark}\label{rmk.trivialori}
Notice that with this definition each trivial real vector bundle $\mathbf{n}=X\times\R^n\to X$ has a canonical $E$-orientation. Namely, $X^{\mathbf{n}} = X[n]$, and so the shift isomorphism provides a distinguished isomorphism $[X,E] \to [X^{\mathbf{n}}[-n],E]$.
\end{remark}

\begin{definition}
The datum of an $E$-orientation $\sigma_V$ of a rank $r$ virtual vector bundle  $V$ is equivalent to the datum of a generator $\tau_V$ of $[X^V[-r], E]$ as an $[X,E]$-module; namely, the image of the unit $1\in[X,E]$ via the isomorphism $\sigma_V$. The element 
\[
\tau_V\colon X^V[-r] \to E
\]
will be called the \emph{Thom element} of the $E$-oriented bundle $V$.
\end{definition}
\begin{remark}\label{multiplier}
Since orientations are isomorphisms of $[X,E]$-modules from $[X,E]$, they are a torsor for the group of $[X,E]$-module automorphisms of $[X,E]$, i.e. for the group $GL_1[X,E]$ of units of $[X,E]$. In other words, if $\sigma_V, \tilde{\sigma}_V$ are two $E$-orientations on an $E$-orientable rank $r$ (virtual) vector bundle $V$ over $X$, there exists a unique element $m_V$ in $GL_1[X,E]\subseteq [X,E]$ making the following diagram
\[
\begin{tikzcd}[column sep=tiny]
{[X,E]} \arrow[rr, "{ m_V\cdot-}"] \arrow[rd, "\sigma_V"'] &              & {[X,E]} \arrow[ld, "\tilde{\sigma}_V"] \\
                                                                                         & {[X^V[-r],E]}, &                                       
\end{tikzcd}
\]
commute, where $\cdot$ is the product in $[X,E]$. Equivalently, we have $\tau_V=m_V\cdot \tilde{\tau}_V$.
\end{remark}

\begin{remark}\label{rem:GL1-representable}
The functor 
$X\mapsto GL_1[X,E]$ is representable: there exists a spectrum $GL_1E$ with a natural isomorphism
\[
GL_1[X,E] \cong [X,GL_1E].
\]
Therefore a multiplier $m_V$ can be equivalently seen as a morphism $m_V\colon X\to GL_1E$.
\end{remark}

\begin{remark}\label{rem:2-out-of-3}
A noteworthy property of $E$-orientations is that they satisfy the 2-out-of-3 property: if a short exact sequence $0\to V_1\to V_2\to V_3\to0$ of vector bundles on a topological space is given, then $E$-orientations on two of the vector spaces in the sequence canonically determine an $E$-orientation of the third one, see \cite{May-E-infty}. 
\end{remark}
The main idea now is to study not only an $E$-orientation of a single vector bundle, but to look at systems of coherent $E$-orientations of a family of vector bundles.

\begin{definition}
A family of vector bundles $\mathcal{F}$ is said to be \emph{closed} if it closed under the operations of pullback and box sum. This means that:
\begin{enumerate}
    \item if the vector bundles $V\to X$ and $W\to Y$ are in the family $\mathcal{F}$, then the box sum $V\boxplus W\to X\otimes Y$ is in $\mathcal{F}$,\\
    \item for any map $f:X \to Y$ and every vector bundle $V\to Y$ in the family $\mathcal{F}$, then the pullback bundle $f^*V\to X$ is in $\mathcal{F}$.
    \end{enumerate}
\end{definition}
\begin{remark}
Notice that, if $V,W$ are vector bundles over $X$, then via the isomorphism $\Delta^*(V\boxplus W) \cong V+W$ from Remark \ref{rem:on-thom-spectra} any closed family is automatically closed under the operation of direct sum of vector bundles. 
\end{remark}
\begin{example}
Examples of closed families are the family of all (finite rank) real vector bundles and the family of all (finite rank) trivial real vector bundles. Other, more interesting examples are given by the family of all
real oriented vector bundles (i.e. real vector bundles $V$ with a reduction of the structure group to $SO(\rk V)$, where $\rk{V}$ denotes the rank of $V$ as a real vector bundle), spin bundles (i.e, with reduction of the structure group to $Spin(\rk V)$), and 
complex vector bundles (i.e., even rank real vector bundles $V$ with a reduction of the structure group to $U(\frac{1}{2}\rk V)$).
\end{example}

\begin{definition}\label{def:oriented-family}
A \emph{coherent system of} $E$-\emph{orientations} on a closed family of vector bundles $\mathcal{F}$ (or an $E$-orientation of $\mathcal{F}$) is the datum of an $E$-orientation $\sigma_V: [X,E] \xrightarrow{\sim} [X^V[-\rk{V}], E]$, for each $V \in \mathcal{F}$, satisfying the following coherence conditions: 

\begin{enumerate}
\item Given $V,W\in\mathcal{F}$ and $f:X\to Y$ the following diagrams commute.
 \[
\begin{tikzcd}
{[X,E]\otimes [Y,E]} \arrow[r, "\sigma_V\otimes \sigma_W"] \arrow[d] & {[X^V[-\rk{V}],E] \otimes [Y^W[-\rk{W}],E]} \arrow[d] \\
{[X\otimes Y,E]} \arrow[r, "\sigma_{V\boxplus W}"]                   & {[X^{V}\otimes Y^W[-\rk{V}-\rk{W}],E ]}              
\end{tikzcd}
\]
and
\[
\begin{tikzcd}
{[Y,E]} \arrow[d, "f^*"'] \arrow[r, "\sigma_V"] & {[Y^{V}[-\rk{V}],E]} \arrow[d, "(f^V)^*"] \\
{[X,E]} \arrow[r, "\sigma_{f^*V}"']             & {[X^{f^*V}[-\rk{f^*V}],E]}                  
\end{tikzcd}
\]
\vskip 0.5 cm 
\item if a trivial vector bundle $\mathbf{n}$ is in $\mathcal{F}$, then $\sigma_{\mathbf{n}}$ is the canonical orientation of $\mathbf{n}$.
\end{enumerate}
\end{definition}

The next definition is motivated by the following two facts we noticed in Remarks \ref{rmk.trivialori} and \ref{rem:2-out-of-3}: every trivial bundle is canonically oriented, and $E$-orientations satisfy a $2$-out-of-$3$ property.
\begin{definition}
A closed family of vector bundles $\mathcal{F}$ is said to be \emph{stable} if it contains the family of trivial bundles and satisfies the $2$-out-of-$3$ property, i.e., if for a short exact sequence of vector bundles
\[
0 \to V_1 \to  V_2 \to V_3 \to 0
\]
two of the $V_i$'s are in $\mathcal{F}$ then also the third is in $\mathcal{F}$.
\end{definition}
\begin{example}
Let us consider the family of complex vector bundles. This is a closed family but not a stable family: it contains only ``half'' of the trivial bundles (those with even rank as real vector bundles). To obtain a stable family out of the closed family of complex bundles we need to stabilize it, i.e. consider vector bundles $V$ such that $V+\mathbf{k}$ is complex for some $k\geq 0$. Such bundles are called stably complex. The family of stably complex vector bundles is a stable family and it is the smallest stable family that contains the family of complex bundles. More generally, for any closed family of vector bundles we can consider its stabilization.
\end{example}

\begin{remark}
An $E$-orientation of a stable family $\mathcal{F}$ is defined as an $E$-orientation of $\mathcal{F}$ as a closed family. It is immediate to see that the datum of an $E$-orientation of a closed family is equivalent to the datum of an $E$-orientation on its stabilization.
\end{remark}

\begin{remark}\label{rem:oriented-family}
Equivalently, conditions (1--2) above can be expressed in terms of Thom elements as
\begin{enumerate}
    \item $\tau_{V\boxplus W}=\tau_V \otimes_E \tau_W$;
    \item $\tau_{f^*V}=(f^V)^*\tau_V$;
    \item $\tau_{\mathbf{n}}=1\in [X,E]=[X^{\mathbf{n}}[-n],E]$.
\end{enumerate}
\end{remark}
\begin{remark}
Let $\{\sigma_V\}_{V \in \mathcal{F}}$ be a an $E$-orientation of a stable family $\mathcal{F}$. Then it makes sense to talk about the $E$-orientation $\sigma_{V}$ of a virtual bundle $V=V_0-V_1$ where both $V_0,V_1 \in \mathcal{F}$. Namely, by the $2$ out of $3$ property, any complement $\tilde{V}_1$ of $V_1$ is in $\mathcal{F}$. Writing $V = V_0+\tilde{V}_1-\mathbf{n}$, we have natural isomorphisms of shifted Thom spectra
\[
X^{V}[-\rk{V}] \cong X^{V_0+\tilde{V}_1-\mathbf{n}}[-\rk{V}] \cong
\]
\[
\cong  X^{V_0+\tilde{V}_1}[-\rk{V}-n] \cong X^{V_0+\tilde{V}_1}[-\rk{(V_0+\tilde{V}_1)}].
\]
Since $V_0+\tilde{V}_1 \in \mathcal{F}$, we have an $E$-orientation $\sigma_{V_0+\tilde{V}_1}$, which we can take as the $E$-orientation $\sigma_{V}$ of $V$. One checks that $\sigma_{V}$ is well defined, i.e., it is independent of the choice of the complement $\tilde{V}_1$. This way the notion of $E$-orientation of a stable family of vector bundles extends to stable families of virtual vector bundles. 
\end{remark}

\begin{remark}\label{rem:boxsum}
Let $V,W$ be two (virtual) vector bundles over the manifold $X$ in the $E$-oriented family $\mathcal{F}$. Then the pullback isomorphism $V+ W = \Delta^*(V\boxplus W)$,
and conditions (1--2) in Remark \ref{rem:oriented-family} give 
\[
\tau_{V+W}=(\Delta^{V\boxplus W})^*(\tau_V \otimes_E\tau_W)=:\tau_V \cdot \tau_W.
\]
The normalisation condition (3) then gives $\tau_V\cdot\tau_{-V}=1$.
\end{remark}

\begin{remark}
It follows from conditions (1--2) in Definition \ref{def:oriented-family} that for any (virtual) vector bundle $V$ in the $E$-oriented stable family $\mathcal{F}$ on $X$, the multiplication by the Thom class $\tau_V\in [X^V[-\rk V],E]$ gives natural morphisms of $[X,E]$-modules
\[
[X^W,E]\xrightarrow{\tau_V\cdot } [X^{V+W}[-\rk V],E].
\]
As $\tau_V\cdot\tau_{-V}=1$, these are actually isomorphisms. In particular, by taking $W=\mathbf{n}$, we see that  multiplication by the Thom class $\tau_V$ is an isomorphism of 
$[X,E]$-modules
\[
[X[n],E]\xrightarrow{\tau_V\cdot } [X^{V}[n-\rk V],E],
\]
for
any $n\in \mathbb{Z}$.
\end{remark}

\medskip

So far we have defined compatible systems of $E$-orientations of a stable family of vector bundles $\mathcal{F}$. Again, rather than study a single system, it is interesting to study how two compatible systems interact. As in Remark \ref{multiplier}, two systems of $E$-orientations $\{\sigma_V,s_V\}_{V \in \mathcal{F}}$ define a set of Thom classes $\{\tau_V,t_V\}_{V \in \mathcal{F}}$ as well as a set of multipliers $\{m_V\}_{V \in \mathcal{F}}$, uniquely defined by the property 
$\tau_V = m_Vt_V$. Now we want to state some properties satisfied by the set multipliers $\{m_V\}_{V \in \mathcal F}$.
\begin{proposition}\label{prop3.8}
Let $\mathcal{F}$ be a stable family of vector bundles, and $\sigma$ and $s$ two $E$-orientations of $\mathcal{F}$, with associated Thom classes $\tau$ and $t$, and system of multipliers $m$. Then for any two (virtual) vector bundles in the family $\mathcal{F}$ over a space $X$, we have $m_{V+W} = m_Vm_W$, and for any trivial vector bundle $\mathbf{n}$ on $X$ we have $m_{\mathbf{n}}=1$. In particular, $m_{V+\mathbf{n}}=m_V$ for any $V$ and $\mathbf{n}$.
\end{proposition}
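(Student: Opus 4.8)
The plan is to reduce all three assertions to the multiplicativity relation $\tau_{V+W}=\tau_V\cdot\tau_W$ already recorded in Remark \ref{rem:boxsum}, together with the (essentially formal) observation that the product $\cdot$ on Thom classes is bilinear with respect to the ambient $[X,E]$-module structures. First I would dispose of the normalisation: by condition (3) in Definition \ref{def:oriented-family} both $\sigma$ and $s$ restrict to the canonical orientation on any trivial bundle $\mathbf n\in\mathcal F$, so $\tau_{\mathbf n}=t_{\mathbf n}=1\in[X,E]=[X^{\mathbf n}[-n],E]$, and from $\tau_{\mathbf n}=m_{\mathbf n}t_{\mathbf n}$ one reads off $m_{\mathbf n}=1$. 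Granting the multiplicativity $m_{V+W}=m_Vm_W$, the last claim $m_{V+\mathbf n}=m_V$ is then immediate, being just $m_Vm_{\mathbf n}=m_V\cdot 1$.

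The heart of the argument is $m_{V+W}=m_Vm_W$, and here is the computation I would run. By Remark \ref{rem:boxsum}, for $V,W$ over the same $X$ in $\mathcal F$ we have $\tau_{V+W}=(\Delta^{V\boxplus W})^*(\tau_V\otimes_E\tau_W)$, and likewise $t_{V+W}=(\Delta^{V\boxplus W})^*(t_V\otimes_E t_W)$. Substituting $\tau_V=m_V\cdot t_V$ and $\tau_W=m_W\cdot t_W$ (the module action of the units $m_V,m_W\in GL_1[X,E]$ from Remark \ref{multiplier}) and using that the external pairing $\otimes_E$ is a morphism of modules over the external product of rings $[X,E]\otimes[X,E]\to[X\times X,E]$, one gets
\[
\tau_V\otimes_E\tau_W=(m_V\otimes_E m_W)\cdot(t_V\otimes_E t_W).
\]
Applying $(\Delta^{V\boxplus W})^*$, which is a map of modules over the ring homomorphism $\Delta^*\colon[X\times X,E]\to[X,E]$, and recalling that $\Delta^*(m_V\otimes_E m_W)=m_Vm_W$ by the very definition of the ring structure on $[X,E]$, we obtain
\[
\tau_{V+W}=(m_Vm_W)\cdot t_{V+W}.
\]
Since $s$ is an orientation, $t_{V+W}$ is a free generator of the rank-one $[X,E]$-module $[X^{V+W}[-\rk(V+W)],E]$; comparing with $\tau_{V+W}=m_{V+W}\cdot t_{V+W}$ and using that $c\mapsto c\cdot t_{V+W}$ is injective gives $m_{V+W}=m_Vm_W$. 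For virtual $V,W$ the identical computation applies, since Remark \ref{rem:boxsum} already covers virtual bundles and passing to the virtual setting only shifts the relevant Thom spectra, which is compatible with all the structure used.

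The step I expect to require the most care is the claimed bilinearity: that the external smash-product pairing $[X^V[-\rk V],E]\otimes[X^W[-\rk W],E]\to[X^V\otimes X^W[-\rk V-\rk W],E]$ intertwines the $[X,E]\otimes[X,E]$-module structure on the source with the $[X\times X,E]$-module structure on the target along the external product of rings. This is \emph{purely formal} --- it uses only associativity and commutativity of $m\colon E\otimes E\to E$, co\-associativity and co\-commutativity of the comonoids, and naturality of the comodule structure maps $\Delta_V\colon X^V\to X\otimes X^V$ --- but writing out the relevant commuting squares is the one place where one actually has to compute rather than quote a previous remark. Everything else is bookkeeping with shifts and with the Thom-class relations of Remark \ref{rem:oriented-family}.
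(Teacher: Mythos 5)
Your proposal is correct and follows essentially the same route as the paper: both arguments combine the multiplicativity of Thom classes $\tau_{V+W}=\tau_V\cdot\tau_W$ from Remark \ref{rem:boxsum} with the compatibility of the $X$- and $X\otimes X$-comodule structures (your ``bilinearity'' of the external pairing, which the paper quotes as Remark \ref{rem:on-thom-spectra} in the form $(\lambda\mu)(t_V\cdot t_W)=\lambda t_V\cdot\mu t_W$), and then conclude by uniqueness of the multiplier against the generator $t_{V+W}$, with the trivial-bundle case read off from the normalisation $\tau_{\mathbf n}=t_{\mathbf n}=1$. The only difference is presentational: you unwind the diagonal-pullback step explicitly, whereas the paper cites its earlier remark for the same fact.
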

\begin{proof}
As the $X$-comodule structure on $X^{V+W}$ and the $X\otimes X$-comodule structure on $X^V\otimes X^W$ are compatible (Remark \ref{rem:on-thom-spectra}), for any $\lambda, \mu \in [X,E]$, we have  $(\lambda\mu) (t_V\cdot t_W) = \lambda t_V \cdot \mu t_W$. Therefore, by Remark \ref{rem:boxsum} we have that
\begin{align*}
m_{V+W} t_{V+W}&=
\tau_{V+W}\\ &= \tau_V \cdot \tau_W\\
&= m_Wt_V \cdot m_Wt_W\\
&= m_Vm_W t_V \cdot t_W\\
&= m_Vm_Wt_{V+W}. 
\end{align*}
The conclusion follows from uniqueness of multipliers. The statement for trivial bundles is immediate from the normalisation condition $\tau_{\mathbf{n}}=t_{\mathbf{n}}=1$.
\end{proof}

\begin{corollary}
For the multipliers associated to a (virtual) vector bundle $V \in \mathcal{F}$ and to its opposite $-V$, we have $m_Vm_{-V} = 1$.
\end{corollary}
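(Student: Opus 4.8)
The plan is to deduce this directly from Proposition \ref{prop3.8}, which already establishes both the multiplicativity $m_{V+W}=m_Vm_W$ and the normalisation $m_{\mathbf{n}}=1$ for the system of multipliers attached to the pair of $E$-orientations $\sigma, s$ of the stable family $\mathcal{F}$. The only thing to check is that the hypotheses of that proposition apply with the second bundle taken to be $-V$, and that the sum $V+(-V)$ is recognised as a trivial bundle so that the normalisation clause can be invoked.

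First I would recall that, since $\mathcal{F}$ is a stable family and $V\in\mathcal{F}$ is a virtual bundle, the negative $-V$ again lies in $\mathcal{F}$ (this is exactly the closure of stable families under negatives of virtual bundles used throughout the preceding remarks, via the $2$-out-of-$3$ property applied to a complement of a vector-bundle representative). Then $V$ and $-V$ are two virtual vector bundles in $\mathcal{F}$ over the same base $X$, so Proposition \ref{prop3.8} gives $m_{V+(-V)}=m_V\,m_{-V}$. Next I would observe that $V+(-V)=\mathbf{0}$, the trivial rank $0$ bundle over $X$; applying the trivial-bundle clause of Proposition \ref{prop3.8} (with $n=0$) yields $m_{\mathbf 0}=1$. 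Combining the two identities gives $m_V\,m_{-V}=m_{V+(-V)}=m_{\mathbf 0}=1$, which is the claim.

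There is essentially no obstacle here: the statement is a one-line consequence of Proposition \ref{prop3.8}. The only point that deserves a word is the passage from vector bundles to \emph{virtual} vector bundles in the additivity of the multipliers — i.e. that $m_{V+W}=m_Vm_W$ continues to hold when $V,W$ are virtual — but this is already part of the statement of Proposition \ref{prop3.8} (it is phrased for "any two (virtual) vector bundles in the family $\mathcal{F}$"), and it also follows formally by writing $V=V_0-\mathbf n$, $W=W_0-\mathbf m$ with $V_0,W_0$ honest bundles in $\mathcal{F}$ and using $m_{\mathbf n}=m_{\mathbf m}=1$ together with the vector-bundle case. Hence the write-up can simply cite Proposition \ref{prop3.8} twice, once for additivity and once for the normalisation, and conclude.
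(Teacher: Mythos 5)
Your proposal is correct and matches the paper's (implicit) argument: the corollary is stated as an immediate consequence of Proposition \ref{prop3.8}, applying multiplicativity to the pair $V$, $-V$ and the normalisation $m_{\mathbf{0}}=1$ to the trivial virtual bundle $V+(-V)$. Your extra remarks on extending additivity to virtual bundles are consistent with how the paper extends orientations and multipliers to virtual bundles in the surrounding remarks.
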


\begin{remark}
By Remark \ref{rem:GL1-representable}, the collection of multipliers can be seen as a collection of morphisms $m_V\colon X\to GL_1E$, for any (virtual) vector bundle $V$ over $X$,  and the compatibility of multipliers with pullbacks amounts to the commutativity of the diagrams
\[
\begin{tikzcd}
{Y} \arrow[rr, "{f}"] \arrow[rd, "m_{f^*V}"'] &              & {X} \arrow[ld, "m_V"] \\
                                                                                         & {GL_1E}, &                                       
\end{tikzcd}
\]
for any map $f:Y\to X$.
\end{remark}

\section{Inque tuo sedisti, Sisyphe, saxo}
We can now introduce the notion of an $E$-oriented map between compact smooth manifolds and define a pushforward in $E$-cohomology along an $E$-oriented map. This will generalize the construction of the integration map in Section \ref{sec:integration}, which will be recovered as the pushforward along the terminal morphism $X\to \ast$ for an $E$-oriented manifold $X$.

For a smooth map $f\colon X\to Y$ we write $T_f$ for the virtual bundle over $X$ defined by
\[
T_f:=TX-f^*TY.
\]
\begin{definition}
An $E$-orientation of a smooth map $f\colon X\to Y$ is an $E$-orientation of the virtual bundle $-T_f$.
\end{definition}
\begin{remark}
Notice that an $E$-orientation of a manifold is a special case of $E$-orientation for a morphism: $X$ is $E$-oriented precisely when $t:X\to *$ is.
\end{remark}

\begin{remark}
The 2-out-of-3 property of $E$-orientation of (virtual) vector bundles implies that also $E$-orientations of maps have a 2-out-of-3 property: if 
\[
X\xrightarrow{f} Y\xrightarrow{g} Z
\]
are smooth maps and two out of $\{f,g,g\circ f\}$ are $E$-oriented then so is the third. In particular, if $f\colon X\to Y$ is a smooth map between $E$-oriented manifolds then $f$ is canonically $E$-oriented. See \cite{May-E-infty}.
\end{remark}

We now describe how to define a pushforward map 
\[
f_*\colon [X,E]\to [Y[\dim X-\dim Y],E] 
\]
for an $E$-oriented map $f\colon X \to Y$ which is a smooth fibration with typical fibre a smooth compact manifold $F$ of dimension $d=\dim X-\dim Y$. In this situation we can think of $f$ as a parametrized family over $Y$ of 
(nice) topological spaces, i.e., as an $\infty$-functor $f\colon Y \to \mathsf{Top}$, where on the left we are writing $Y$ for its $\infty$-Poincar\'e  groupoid (i.e., for the $\infty$-groupoid with objects the points of $Y$, $1$-morphisms the paths between points, $2$-morphisms the homotopies between paths, etc), and on the right $\mathsf{Top}$ is the $\infty$-category of (nice) topological spaces, with homotopies between continuous maps, homotopies between homotopies, etc. as higher morphisms. Namely, the definition of (Serre) fibration is precisely a way of encoding this idea. Now, we are always looking at topological spaces as spectra via $\Sigma^\infty_+$, so we look at $f$ as a family of spectra parametrized by $Y$,
\[
Y\xrightarrow{f} \mathsf{Top}\xrightarrow{\Sigma^\infty_+} \mathsf{Sp}.
\]

We refer the interested reader to \cite{abgpara} or \cite{maysig} for a detailed and rigorous treatment of the category $\mathsf{Sp}_Y$ of all such \emph{parametrized spectra} with parameter space $Y$; for the aim of this note the intuitive definition sketched above will be sufficient.

The $\infty$-category $\mathsf{Sp}_Y$ inherits from $\mathsf{Sp}$ the pointwise monoidal structure with unit object the pointwise unit $\SSS_Y$ given by the constant family with fibre $\SSS$ over $Y$, and so pointwise duals. In particular we will have an Alexander--Spanier dual $D_Y(f)$ coming with a distinguished morphism

\[
\varphi_f\colon \SSS_Y \to D_Y(f)
\]
in $\mathsf{Sp}_Y$.
By putting everything together (formally, this means taking the $\infty$-colimit of
the natural transformation $\varphi_f$ of $\infty$-functors $Y\to \mathsf{Sp}$), we get a map
\[
Y \to {{\mathrm{colim}}}_Y D_Y(f).
\]
If we denote by $F_y$ the fibre of $f\colon X\to Y$  over $y\in Y$, then
Atiyah duality pointwise identifies $D_Y(f)_y=D(F_y)$ with $F_y^{-T_f}$ so the one above is a map
\[
Y \to {{\mathrm{colim}}}_Y F_y^{-T_f}.
\]
Maybe not surprisingly, the colimit on the right hand side is the Thom spectrum $X^{-T_f}$ (see \cite{abgpara} for a rigorous proof). Therefore, at least  when $f\colon X \to Y$ is a smooth fibration with compact fibres, we have a natural Pontryagin--Thom morphism
\[
\varphi_{PT}\colon Y \to X^{-T_f}.
\]

In the best of possible worlds, this would be true for any smooth map $f$, not necessarily a fibration with compact fibres. And indeed it is. This can be shown by factoring the map $f$ as the composition of an embedding and a smooth fibration with compact fibres and noticing that for an embedding $\iota:X\hookrightarrow Y$ one has the geometrically defined Pontryagin--Thom collapse map $Y \to X^{\nu_\iota} = X^{-T_\iota}$. However, to our knowledge, the general Pontryagin--Thom morphism $Y \to X^{-T_f}$ for a general $f$ has not yet been given a transparent interpretation in terms of the monoidal closedness of the category of spectra. See, however \cite[Remark 4.17]{abgpara} .

The morphism $\varphi_{PT}\colon Y \to X^{-T_f}$ is a morphism of $Y$-comodules, where the $Y$-comodule structure on $X^{-T_f}$ is induced by its $X$-comodule structure via $f$, i.e., the diagram
\[
\begin{tikzcd}
Y \arrow[d, "\Delta_Y"'] \arrow[rr, "\varphi_{PT}"]      &                                                & X^{-T_f} \arrow[d, "(f\otimes \mathrm{id})\circ \Delta_{\-T_f}"]                 \\
Y\otimes Y \arrow[rr, "{{\rm id}} \otimes \varphi_{PT}"'] &  & Y \otimes X^{-T_f}  
\end{tikzcd}
\]
commutes. 

Therefore, if $f$ is $E$-oriented we can define a \emph{pushforward map}
\[
f_*\colon [X,E]\to [Y[\dim X-\dim Y],E]
\]
as the composition
\[
[X,E]\xrightarrow{\sigma_{-T_f}} [X^{-T_f}[\dim X-\dim Y],E]\xrightarrow{\varphi_{PT}^*}[Y[\dim X-\dim Y],E].
\]
When $f$ is the terminal morphism, the pushforward map $f_\ast$ is $E$-integration over $X$.

\begin{remark}
Since $\varphi_{PT}$ is a morphism of $Y$-comodules, 
\[
\varphi^*_{PT}\colon [X^{-T_f}[\dim X-\dim Y],E]\to [Y[\dim X-\dim Y],E]
\]
is a morphism of $[Y,E]$-modules. Since $[X,E]\xrightarrow{\sigma_{-T_f}} [X^{-T_f}[\dim X-\dim Y],E]$ is an isomorphism of $[X,E]$-modules by definition of $E$-orientation, it will also be an isomorphism of $[Y,E]$-modules where we look at every $[X,E]$ module as an $[Y,E]$-module via the ring homomorphism $f^*\colon [Y,E]\to [X,E]$. Summing up, $f_*$ is a morphism of $[Y,E]$-modules, where we look at $[X,E]$ as an $[Y,E]$-module via $f^*$. This is the \emph{projection formula}
\[
f_*((f^*a)\cdot x)=a\cdot f_*(x)
\]
for any $a\in [Y,E]$ and $x\in [X,E]$.
\end{remark}

\begin{remark}\label{rem:pullback-duality}
As we noticed, if  $X$ and $Y$ are $E$-oriented, then $f\colon X\to Y$ gets a canonical $E$-orientation by the 2-out-of-3 property. If moreover $X$ and $Y$ are compact, multiplications by the Thom classes of $TY$ and of $f^*TY$ intertwine the morphisms induced in $E$-cohomology by $\varphi_{PT}$ and the dual of $f$, i.e., we have a commutative diagram
\[
\begin{tikzcd}
{[DX[\dim X],E]} \arrow[d, "\wr"'] \arrow[rr, "(Df)^*"]      &                                                & {[DY[\dim X],E]} \arrow[d, "\wr"]                 \\
{[X^{-TX}[\dim X],E]}\arrow[d,  "\wr","f^*\tau_{TY}"'] && {[Y^{-TY}[\dim X],E]}\arrow[d, "\tau_{TY}", "\wr"']  \\
 {[X^{-T_f}[\dim X-\dim Y],E]} \arrow[rr, "\varphi_{PT}^*"'] &  & {[Y[\dim X-\dim Y],E]}  
\end{tikzcd}
\]
of $[Y,E]$-modules. So we see that under these assumptions the pushforward $f_*$ can be written as the composition
\[
[X,E]\xrightarrow{\sim} [DX[\dim X],E] \xrightarrow{(Df)^*} [DY[\dim X],E] \xrightarrow{\sim} [Y[\dim X-\dim Y],E].
\]
\end{remark}

\begin{remark}
Since they are pullbacks along dualized morphisms, pushforwards are covariantly functorial:  if we are given a composition 
$X\xrightarrow{f} Y\xrightarrow{g} Z$ of $E$-oriented maps, then we have $(g\circ f)_*=g_*\circ f_*$. This is immediately seen via Remark \ref{rem:pullback-duality} in case $X,Y$ and $Z$ are compact $E$-oriented manifolds, 
and by a similar argument in the general case of $E$-oriented maps. See  \cite{dyer} for details.

\end{remark}

\section{E pluribus unum}
Let us consider complex vector bundles. As every rank $k$ complex vector bundle over a manifold $X$ is a pullback of the tautological vector bundle $V_k\to BU(k)$, the naturality of orientations with respect to pullbacks tells us that, in order to $E$-orient all complex vector bundles, we only need to orient all the tautological bundles $V_k\to BU(k)$. Moreover, from
\[
X^{V+\mathbf{1}_{\mathbb{C}}}=X^V[2],
\]
where $\mathbf{1}_{\mathbb{C}}:=\mathbb{C}\times X \to X$ is the trivial rank 1 complex vector bundle over $X$,
and from $j^*V_{k+1}=V_k+\mathbf{1}_{\mathbb{C}}$, where $j\colon BU(k) \to BU(k+1)$ is the canonical embedding, we see that a coherent system of $E$-orientations on all of the $V_k$'s is equivalent to the datum of a commutative diagram
\[
\begin{tikzcd}
\cdots \arrow[r] & {MU(k-1)}\arrow[dr,"{\tau_{k-1}}"'] \arrow[r] & {MU(k)} \arrow[d,"{\tau_k}"] \arrow[r] & {MU(k+1)} \arrow[dl,"{\tau_{k+1}}"] \arrow[r]& \cdots\\
           &                                         & E                                   
\end{tikzcd}
\]
of maps of spectra,
where we have written $MU(k)$ for $BU(k)^{V_k}[-2k]$. By the universal property of the limit, this is equivalent to a single map of spectra
\[
\rho\colon MU \to E,
\]
where by definition 
\[
MU=\varinjlim MU(k)
\]
This spectrum $MU$ may informally be thought of as the infinite desuspension of the Thom spectrum of the infinite dimensional tautological bundle over $BU$.

So far we have not used compatibility of orientations with formation of direct sums, the latter operation giving  the $\infty$-abelian group structure on $BU$. Requiring this is equivalent to requiring that $\rho\colon MU \to E$ is a morphism of homotopy commutative ring spectra. 
The morphism $\rho$ described above has by construction the rather special property that all of its components $\rho_k = \tau_{k}\colon MU(k)\to E$ are generators of $[MU(k),E]$, which are free rank 1 $[BU(k),E]$-modules. Quite remarkably, this property is actually not special at all: \emph{every} morphism of homotopy commutative ring spectra $\rho\colon MU\to E$ has this property. Namely, one has $MU(0)=\SSS$ and the morphism $MU(0)\to MU$ is the unit of $MU$. As a ring morphism preserves the unit, we have a commutative diagram
\[
\begin{tikzcd}
{\SSS=MU(0)}\arrow[dr] \arrow[r] & {MU(1)} \arrow[d,"{\rho_1}"] \arrow[r] & \cdots  \arrow[r]& {MU}\arrow[dll,"{\rho}"]\\
           &                                          E                                   
\end{tikzcd}
\]
Now we have the following
\begin{lemma}\label{lemma:negativity}
The zero section $\iota\colon BU(1) \to BU(1)^{V_1}$ is a homotopy equivalence.
\end{lemma}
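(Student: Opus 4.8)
The plan is to realize the Thom space of $V_1$ as the disk bundle of $V_1$ with a \emph{contractible} sphere bundle collapsed to a point, so that the zero section $\iota$ factors as a composite of two homotopy equivalences. Write $BU(1)=\cp{\infty}=\mathbb{P}(\C^\infty)$ and fix a Hermitian metric on $V_1$; then $BU(1)^{V_1}$ is, up to homotopy, the quotient $D(V_1)/S(V_1)$ of the closed unit disk bundle by the unit sphere bundle, and the zero section factors as
\[
\iota\colon BU(1)\xrightarrow{z}D(V_1)\xrightarrow{q}D(V_1)/S(V_1)=BU(1)^{V_1},
\]
with $z$ the honest zero section of the disk bundle and $q$ the collapse map. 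The first arrow $z$ is a homotopy equivalence because fibrewise scaling $v\mapsto tv$, $t\in[0,1]$, deformation-retracts $D(V_1)$ onto the zero section, so everything reduces to showing that $q$ is a homotopy equivalence.

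The key step is to identify the sphere bundle $S(V_1)$. A unit vector in the fibre of the tautological line bundle over a line $\ell\in\mathbb{P}(\C^\infty)$ is a unit vector $v\in\C^\infty$ with $\ell=\C v$; since $\ell$ is then determined by $v$, the assignment $(\ell,v)\mapsto v$ identifies $S(V_1)$ with the unit sphere $S(\C^\infty)=S^\infty$, i.e.\ with the total space of the universal principal $U(1)$-bundle $S^\infty\to\cp{\infty}$. Since $S^\infty=\varinjlim_n S^{2n-1}$ is weakly contractible, hence contractible, $S(V_1)$ is a contractible subspace of $D(V_1)$. Moreover $(D(V_1),S(V_1))$ is a CW pair: fibrewise it is the pair $(D^2,S^1)$, and the local trivialisations over the CW base $\cp{\infty}$ assemble this into a cofibration. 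Collapsing the contractible, cofibred subspace $S(V_1)$ is therefore a homotopy equivalence $q\colon D(V_1)\xrightarrow{\sim}D(V_1)/S(V_1)$, and composing with $z$ shows $\iota$ is a homotopy equivalence.

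I do not expect a real obstacle: all the content is in the observation that the unit circle bundle of the \emph{tautological} line bundle over $BU(1)$ is the contractible space $S^\infty$. (For a trivial line bundle the sphere bundle is $X\times S^1$ and the statement fails --- the Thom space is then the unreduced suspension of $X_+$ --- so contractibility of $S(V_1)$ is exactly what makes the Lemma work.) The only point needing a little care is the cofibration condition used to collapse $S(V_1)$, which is routine since $\cp{\infty}$ is a CW complex. An alternative, equally short route bypasses disk bundles: invoke the classical homeomorphism $\mathrm{Th}(\text{tautological line bundle over }\cp{n})\cong\cp{n+1}$, under which the zero section becomes the linear inclusion $\cp{n}\hookrightarrow\cp{n+1}$; since Thom spaces commute with the filtered colimit $\cp{\infty}=\varinjlim_n\cp{n}$ and the ``shift-by-one'' map $\varinjlim_n\cp{n}\to\varinjlim_n\cp{n+1}$ is a homotopy equivalence (the telescope is cofinal), the claim follows again.
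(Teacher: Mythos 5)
Your proposal is correct, but your main argument takes a genuinely different route from the paper's. The paper proves the lemma by exhibiting a geometric isomorphism $(\mathbb{P}^n)^{\mathcal{O}_{\mathbb{P}^n}(1)}\cong\mathbb{P}^{n+1}$ under which the zero section becomes the hyperplane inclusion, passing to the colimit over $n$, and then transporting the statement from $L=\mathcal{O}(1)$ to the tautological bundle $V_1$ via naturality of Thom spectra along the inversion self-equivalence of $BU(1)$; your closing ``alternative route'' is essentially this argument, applied directly to the tautological bundle and without the inversion step. Your primary argument instead models the Thom space as $D(V_1)/S(V_1)$ and observes that $S(V_1)\cong S^\infty=EU(1)$ is contractible and cofibred in $D(V_1)$, so both the collapse map and the zero section of the disk bundle are homotopy equivalences. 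That is a perfectly valid and more self-contained proof, and it isolates exactly what makes the lemma tick (contractibility of the universal circle bundle), as your parenthetical about trivial bundles is meant to illustrate --- though note that there the Thom space of a trivial \emph{complex} line bundle is the double suspension $\Sigma^2(X_+)$, not the unreduced suspension of $X_+$; this slip does not affect your point that the zero section fails to be an equivalence in that case. What the paper's route buys, and your disk-bundle argument does not directly provide, is the explicit identification of $BU(1)^{L}$ with $\PP$ realizing Thom/Euler data by hyperplane inclusions, which the paper reuses later (e.g.\ in describing $e^{MU}$ as the hyperplane inclusion $\mathbb{P}^{n-1}\hookrightarrow\mathbb{P}^n$); your alternative colimit route recovers that identification as well.
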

\begin{proof}
Let $L_{1;n}=\mathcal{O}_{\mathbb{P}^n\C}(1)$ be the universal line bundle over $\mathbb{P}^n\C$. 
There is a natural isomorphism $({\mathbb{P}^n\C})^{L_{1;n}} \cong \mathbb{P}^{n+1}\C$ such that the zero section $\mathbb{P}^n\C \to ({\mathbb{P}^n\C})^{V_{1;n}} \cong \mathbb{P}^{n+1}\C$ is identified with the standard inclusion $\mathbb{P}^n\C \hookrightarrow \mathbb{P}^{n+1}\C$ as the hyperplane at infinity. Namely, let $p$ be a point in $\mathbb{P}^{n+1}\C\setminus \mathbb{P}^n\C$. Then the collection of projective lines through $p$, with the point $p$ removed, is a holomorphic line bundle over $\mathbb{P}^n\C$ which is immediately seen to be isomorphic to $\mathcal{O}_{\mathbb{P}^n\C}(1)$. This realises the above mentioned identification. Taking the limit over $n$ we get an isomorphism $BU(1)\cong BU(1)^{L}$, where $L=\mathcal{O}(1)$ is the universal line bundle over $BU(1)\cong \PP$. The universal line bundle $L$ is obtained from the tautological line bundle $V_1$ by pullback along the equivalence $\mathrm{inv}\colon BU(1)\to BU(1)$ mapping each line bundle to its inverse induced by the group automorphism of $U(1)$ mapping $z$ to $z^{-1}$. By Remark \ref{rem:on-thom-spectra} we therefore have a commutative diagram
\[
\begin{tikzcd}
{BU(1)}\arrow[d,"\mathrm{inv}"] \arrow[r,"\iota"] & BU(1)^{L} \arrow[d,"\mathrm{inv}^{V_1}"] \\
   {BU(1)}\arrow[r,"\iota"]        & {BU(1)^{V_1}}
\end{tikzcd}
\]
where three of the arrows are homotopy equivalences, and so also the fourth is.
\end{proof}
By the argument in Lemma \ref{lemma:negativity} we see that $\rho_1$ provides an extension $\varepsilon$ of the unit of $E$ to $\PP[-2]$:
\[
\begin{tikzcd}
& {\PP[-2]}\arrow[d,"\mathrm{inv}","\wr"'] \arrow[ddd, dashed, bend left=50, "\varepsilon"]\\
{\mathbb{P}^1\C[-2]}\arrow[ru, "i_1"] \arrow[d,equal] & {\PP[-2]}\arrow[d,"\iota","\wr"'] \\
{\SSS}  \arrow[r] \arrow[rd] & {MU(1)} \arrow[d, "\rho_1"] \\
                                             & E                                   
\end{tikzcd},
\]
where $i_1\colon \mathbb{P}^1\C\to \PP$ is the standard inclusion.
As a consequence of the fact that $\PP$ has a CW-complex structure with only even dimensional cells, one can show that the datum of such an extension $\varepsilon$ gives an isomorphism of $[\PP,E]$-modules $[\PP,E]\cong [\PP[-2],E] \cong [MU(1),E]$ and that $\varepsilon$ as an element in $[\PP[-2],E]$ is a generator of this $[\PP,E]$-module. This is a generalized version of the usual isomorphism $H^2(\PP;\mathbb{Z})\cong H^0(\PP,\mathbb{Z})$ in singular cohomology, see \cite[Lecture  4]{luriechromatic} for details. Therefore, $\rho_1$ is an $E$-orientation of the tautological line bundle $V_1\to BU(1)$. To conclude, we need to show that also the $\rho_k$'s with $k\geq 2$ are $E$-orientations of the tautological vector bundles $V_k\to BU(k)$. This is a corollary of the splitting principle for complex bundles in $E$-cohomology, for $E$ an $E_\infty$-ring spectrum. Namely,  the pullback of the tautological bundle $V_k\to BU(k)$ along the morphism of topological spaces $j_k\colon BU(1)^k\to BU(k)$ splits as $j_k^*V_k=\boxplus_{i=1}^k V_1$ and 
this induces identifications
\[
j_k^*\colon [BU(k),E]\xrightarrow{\sim} [BU(1)^{\otimes k},E]^{\mathrm{Sym}_k}
\]
\[
(j_k^{V_k})^*\colon [MU(k),E]\xrightarrow{\sim} [MU(1)^{\otimes k},E]^{\mathrm{Sym}_k}
\]
where $(-)^{\mathrm{Sym}_k}$ denotes the $\mathrm{Sym}_k$-invariants.
As the $j_k$'s and the actions of the symmetric groups are compatible with the canonical embeddings $BU(k)\to BU(k+1)$, this identifies $\rho_k$ with the symmetric element $(\rho_1)^{\otimes k}$. The latter is manifestly the datum of an isomorphism $[BU(1)^{\otimes k},E]^{\mathrm{Sym}_k}\cong [MU(1)^{\otimes k},E]^{\mathrm{Sym}_k}$ of $[BU(1)^{\otimes k},E]^{\mathrm{Sym}_k}$-modules, so the $\rho_k$'s are $E$-orientations. Again, see \cite[Lectures 4 \& 6]{luriechromatic} for details. 

Summing up, by the above reasoning we have proven the following
\begin{proposition}\label{prop:complex-orientation-spectrum}
A system of compatible $E$-orientations on complex (and so also on stably complex) vector bundles is equivalent to the datum of a single morphism of homotopy commutative ring spectra $\rho\colon MU\to E$.
\end{proposition}
In view of Proposition \ref{prop:complex-orientation-spectrum} it is natural to give the following definition.
\begin{definition}
Let $E$ be an $E_\infty$-ring spectrum. 
A morphism of homotopy commutative ring spectra $\rho\colon MU\to E$ is called a \emph{complex orientation} of the $E_\infty$-ring spectrum $E$.
\end{definition}
\begin{remark}
In particular, the identity morphisms $\mathrm{id}_{MU}\colon MU\to MU$ defines the canonical complex orientation of $MU$.
\end{remark}
\begin{remark}
As both $MU$ and $E$ are $E_\infty$-ring spectra, one may be tempted to think the morphism $MU\to E$ defined by a system of compatible $E$-orientations on complex  vector bundles is actually a morphism of $E_\infty$-ring spectra, but it is actually not necessarily so. Namely, differently from the case of commutative rings inside all rings, the $\infty$-category of $E_\infty$-ring spectra is not a full $\infty$-subcategory of the $\infty$-category of ring spectra. This is so because the enhancement of a ring spectra morphism between two $E_\infty$-rings to an $E_\infty$-ring morphism is structure and not property: it is the additional datum of all the coherent homotopies involved in the definition of a morphism of $E_\infty$-ring spectra. By the same argument for commutative rings inside all rings, a morphism of ring spectra between $E_\infty$-rings is automatically a morphism of homotopy commutative ring spectra, but an enhancement to a full morphism of $E_\infty$-rings may not exist. See \cite{hopkins-lawson} for a detailed discussion. Clearly, if $MU\to E$ is a morphism of $E_\infty$-ring spectra then it is in particular a morphism of homotopy commutative ring spectra, and so a complex orientation of $E$.
\end{remark}

\begin{remark}\label{remark:need-only-rho1}
The above discussion shows that any lift of $1\in [\SSS,E]$ to an element $\varepsilon\in [\PP[-2],E]$ under the pullback morphism $[\PP[-2],E] \to [\mathbb{P}^1\C[-2],E]=[\SSS,E]$ defines a morphism $\rho_1\colon MU(1)\to E$ that
uniquely extends to a morphism of $E_\infty$-ring spectra $\rho\colon MU\to E$. Therefore, (homotopy classes of) complex orientations of $E$ bijectively correspond to these lifts. Moreover, one sees by construction that $\varepsilon$ is identified with the pullback of the Thom class $\tau_L\in [BU(1)^L[-2],E]$ along the zero section $\iota\colon BU(1)\to BU(1)^L$
\end{remark}

\begin{remark}\label{rem:geometric-MU}
Although we are not going to make use of this fact, it is worth mentioning that the spectrum $MU$ has an interesting geometric characterization: it is the \emph{complex cobordism} spectrum. For $X$ a finite dimensional smooth manifold the hom-set $[X[n],MU]$ is the set of complex cobordism classes of proper complex oriented maps $f:Z\to X$ with $\dim f:=\dim Z-\dim X=n$ \cite{quillen1971elementary}. In particular, when $X$ is a point we get the complex cobordism ring
\[
\Omega^U:=\bigoplus_{n\geq 0} [\SSS[n],MU]=\bigoplus_{n\geq 0} \pi_n^{\mathsf{Sp}} MU.
\]
The spectrum $MU$ is $(-1)$-connected, i.e., its homotopy groups vanish in negative degree or, equivalently, $MU\cong MU_{\geq 0}$. This gives
$MU_{\leq 0}\cong \allowbreak (MU_{\geq 0})_{\leq 0} \allowbreak \cong H\pi_0^{\mathsf{Sp}}(MU)$, and 
so the fibre sequence associated to the $0$-truncation of $MU$ is
\[
MU_{>0} \to MU \to MU_{\leq 0} \cong H\pi_0^{\mathsf{Sp}}(MU)=H\Z.
\]
As the 0-truncation morphism for a $E_\infty$-ring spectrum is an $E_\infty$-ring map, we read from the above sequence an $E_\infty$-ring map
\[
MU \to H\Z,
\]
and so a canonical complex orientation for $\Z$-valued singular cohomology. Equivalently, this tells us that the family of all complex vector bundles has a natural theory of Thom classes in $\Z$-valued singular cohomology. We notice that, by the functoriality of
$H-:\mathsf{CommRings}\to\mathsf{E_\infty\text{-}RingSp}$
and since $\Z$ is initial in the category of commutative rings, for every commutative ring $A$ there is a distinguished morphism of $E_\infty$-ring spectra $H\Z\to HA$. As a consequence, singular cohomology with coefficients in $A$ has a canonical complex orientation for every commutative ring $A$.
\end{remark}

\begin{remark}
If $\psi\colon E\to F$ is an homomorphism of homotopy commutative ring spectra, then, a complex orientation $\rho\colon MU \to E$ of $E$ can be pushed forward to a complex orientation $\psi_*\rho$ of $F$. In terms of Thom classes of (virtual) stably complex vector bundles the orientation $\psi_*\rho$ is simply defined by 
\[
\tau^{\psi_*\rho}_V=\psi_*(\tau^\rho_V).
\]
\end{remark}

\begin{remark}
Assume we are given two complex orientations 
\[
\rho_1,\rho_2\colon MU \to E
\]
of a multiplicative cohomology theory $E$. We have seen in Remarks \ref{multiplier} and \ref{rem:GL1-representable} that the collection of multipliers between $\rho_1$ and $\rho_2$ is equivalent to the datum of a compatible family of morphism
$m_V\colon X\to GL_1E$, indexed by vector bundles $V\to X$. Due to compatibility with pullbacks, we can restrict to universal bundles: the datum of the whole collection of multipliers $\{m_V\}$ is equivalent to the datum of the multipliers
\[
m_{k}\colon BU(k) \to GL_1E,
\]
and so to the datum of a commutative diagram
\[
\begin{tikzcd}
\cdots \arrow[r] & {BU(k-1)}\arrow[dr,"{m_{k-1}}"'] \arrow[r] & {BU(k)} \arrow[d,"{m_k}"] \arrow[r] & {BU(k+1)} \arrow[dl,"{m_{k+1}}"] \arrow[r]& \cdots\\
           &                                         & GL_1E                                   
\end{tikzcd}
\]
As $BU=\varinjlim BU(k)$, this is in turn equivalent to the datum of a single morphism
\[
m \colon BU \to GL_1E.
\]
Notice that, since $GL_1[X,E]$ is the multiplicative subgroup of the commutative ring $[X,E]$, the group $GL_1[X,E]$ is an abelian group and so the spectrum $GL_1E$ is an $\infty$-abelian group. As the direct sum of vector bundles is the group operation on $BU$, the equation $m_{V+W}=m_V\cdot m_
W$ implies that $m\colon BU\to GL_1E$ is a morphism of homotopy abelian $\infty$-groups.

Group homomorphisms into an abelian group are themselves an abelian group (more concretely: the bundlewise product of two compatible systems of multipliers is again a compatible system of multipliers), and as we have already noticed multiplying a compatible system of orientations with a compatible system of multipliers one gets a multiplicative system of multipliers. In other words, the space of complex orientations of $E$, i.e., the space of homotopy commutative ring spectra morphisms $MU\to E$ is a torsor over the group $\mathrm{Hom}_{\mathrm{Grp}}(BU,GL_1E)$.
\end{remark}
\begin{remark}
The analysis that we made to establish the equivalence between compatible systems of $E$-orientations on complex vector bundles and homotopy commutative ring spectra maps $MU\to E$ can be done in a completely analogous way to establish an equivalence between compatible systems of $E$-orientations on real (resp.\! oriented real) vector bundles  and homotopy commutative ring spectra maps $MO \to E$ (resp. $MSO\to E$).
By Thom's theorem (See \cite[Theorem 1.5.10]{kochman}), both $MO$ and $MSO$ are cobordism spectra. More precisely, $MO$ is the real cobordism spectrum while $MSO$ is the real oriented cobordism spectrum and the respective cobordism rings are obtained as
\[
\Omega^O\cong\bigoplus_{n\geq 0}[\SSS[n],MO]=\bigoplus_{n\geq 0}\pi_n^{\mathsf{Sp}}MO
\]
and
\[
\Omega^{SO}\cong\bigoplus_{n\geq 0}[\SSS[n],MSO]=\bigoplus_{n\geq 0}\pi_n^{\mathsf{Sp}}MSO
\]
Both $MO$ and $MSO$ are $(-1)$-connected spectra, so the fibre sequences associated to their $0$-truncations are
\[
MO_{>0} \to MO \to MO_{\leq 0} \cong H\pi_0^{\mathsf{Sp}}(MO)=H{\Z/2}
\]
\[
MSO_{>0} \to MSO \to MSO_{\leq 0} \cong H\pi_0^{\mathsf{Sp}}(MSO)=H\Z.
\]
and we get 
two natural $E_\infty$-ring maps
\[
MO \to H{\Z/2}
\]
\[
MSO \to H\Z.
\]
This tells us that the family of all real vector bundles has a canonical orientation/Thom classes in $\Z/2$-valued singular cohomology and the family of all oriented real vector bundles has a canonical orientation/Thom classes in $\Z$-valued singular cohomology.
\end{remark}

\section{Ab uno disce omnis}\label{sec.7}
Assume an $E$-orientation of a stable family $\mathcal{F}$ is given. Thom spectra of actual (i.e., nonvirtual) vector bundles come equipped with natural \emph{zero section} maps $\iota_V\colon X \to X^V$ that are morphisms of $X$-comodules, i.e., the diagrams
\[
\begin{tikzcd}
X \arrow[d, "\Delta"'] \arrow[r, "\iota_{V}"]    & X^{V} \arrow[d, "{\Delta^{V}}"] \\
X \otimes X \arrow[r, "\mathrm{id}\otimes \iota_V"'] & X\otimes X^{V}                 
\end{tikzcd}
\]
commute. So, for any $V$ in $\mathcal{F}$ we have pullbacks
\[
\iota_V^*\colon [X^V[-\rk{V}],E] \to [X[-\rk{V}],E]
\]
which are morphisms of $[X,E]$-modules.
\begin{definition}
The element $e_V := \iota^*\tau_V$ of $[X[-\rk{V}],E]$ is called the \emph{Euler class} of $V$. 
\end{definition}
\begin{remark}
From the commutativity of 
\[
\begin{tikzcd}
X \arrow[d, "\Delta"'] \arrow[r, "\iota_{V+W}"]    & X^{V+W} \arrow[d, "{\Delta^{V\boxplus W}}"] \\
X \otimes X \arrow[r, "\iota_{V}\otimes \iota_W"'] & X^{V}\otimes X^{W}                 
\end{tikzcd}
\]
one gets the multiplicativity of Euler classes, 
\[
e_{V+W} = e_{V}\cdot e_{W},
\]
where $\cdot$ is the product in $[X,E]$. For the trivial bundle $\mathbf{1}$, the zero section $\iota_{\mathbf{1}}\colon X\to X^{\mathbf{1}}=X[1]$ is the inclusion of $X$ into its suspension, so it is homotopically trivial. From this and the multiplicativity of Euler classes it follows that if $V$ has a never zero section, so that $V=V_0+\mathbf{1}$ then, $e_{V}=0$.
\end{remark}

The above general discussion applies in particular to complex orientations of $E$, i.e., to a compatible system of $E$-orientation of complex vector bundles.
By Remark \ref{remark:need-only-rho1}, the datum of such an orientation is equivalent to the the datum of a Thom class $\tau_{L}$ such that, under the isomorphism of $[\mathbb{P}^\infty\C, E]$-modules $[MU(1),E]\cong [\mathbb{P}^\infty\C[-2],E]$ induced by the pullback along the zero section
$\iota\colon BU(1) \to BU(1)^{L}$, it lifts the unit element $1\in [\SSS,E]$. By definition of the Euler class, we therefore have that the Euler class $e_{L} \in [\mathbb{P}^\infty[-2]\C,E]$ is a generator of $[\mathbb{P}^\infty\C[-2],E]$ as a $[\mathbb{P}^\infty\C, E]$-module, with $i_1^*e_{L}=1$, where $i_1\colon \mathbb{P}^1\C\to \mathbb{P}^\infty\C$ is the inclusion.
For a fixed complex orientation on $E$, we will write $e^E$ for the Euler class of the universal line bundle $L=\mathcal{O}(1)$, i.e, we will write $e^E=e_{L}$.
\begin{example}
It follows from the proof of Lemma \ref{lemma:negativity} and by the description of $MU$ given in Remark \ref{rem:geometric-MU} that the Euler class $e^{MU}$ of the canonical complex orientation of $MU$ is the hyperplane inclusion $\mathbb{P}^{\infty-1}\C\hookrightarrow \PP$ seen as a complex oriented proper map of dimension $-2$ to $\PP$, i.e., more precisely, that for any $n\geq 1$ the pullback $i_n^*e^{MU}$ along the standard inclusion $i_n\colon \mathbb{P}^n\C\hookrightarrow \PP$ is the complex oriented proper map of dimension $-2$ to $\mathbb{P}^n\C$ given by the hyperplane inclusion $\mathbb{P}^{n-1}\C\hookrightarrow \mathbb{P}^n\C$. 
\end{example}
 Let now $E$ be an even $2$-periodic $E_\infty$-ring spectrum with Bott element $\beta_E$. One of the most important features of even $2$-periodic spectra, and, in general, of complex orientable spectra, lies in the cohomology of (complex) projective spaces. Since the $E$-cohomology of $\PP$ is defined to be the limit of the $E$-cohomologies of the $\mathbb{P}^n\C$'s over the (pullback of the) inclusions $i_n:\mathbb{P}^n\C \hookrightarrow \mathbb{P}^{n+1}\C$, one gets that the choice of an element $\beta_E \xi$ in $[\PP,E]$ is equivalent to the datum of a compatible sequence $\{\beta_E\xi_n \in [\mathbb{P}^n\C,E]\}_{n \in \N}$, where $\xi_n$ is the pullback of $\xi$ along the inclusion $i:\mathbb{P}^n\C \hookrightarrow \PP$ and hence equivalent to a commutative diagram of rings
\[
\begin{tikzcd}
                 &                                            & {[\SSS,E][u]} \arrow[rd] \arrow[d] \arrow[ld] &                                  &        \\
\cdots \arrow[r] & {[\mathbb{P}^{n+1}\C,E]} \arrow[r, "i_n^*"'] & {[\mathbb{P}^{n}\C,E]} \arrow[r, "i_{n-1}^*"']  & {[\mathbb{P}^{n-1}\C,E]} \arrow[r] & \cdots
\end{tikzcd}
\]
where the $n$-th arrow maps $u$ to $\beta\xi_n$.
\begin{proposition}\label{prop.AHSS}
If ${e^E} \in [\PP[-2],E]$ is  the Euler class of a complex orientation of $E$, then the above diagram induces a sequence of compatible isomorphisms
\[
\begin{tikzcd}[column sep=tiny]
\cdots \arrow[r] & {[\SSS,E][u]/(u^{n+2})} \arrow[r] \arrow[d, "\cong"'] & {[\SSS,E][u]/(u^{n+1})} \arrow[d, "\cong"'] \arrow[r] & {[\SSS,E][u]/(u^n)} \arrow[d, "\cong"'] \arrow[r] & \cdots \\
\cdots \arrow[r] & {[\mathbb{P}^{n+1}\C,E]} \arrow[r, "i_n^*"]                      & {[\mathbb{P}^n\C,E]} \arrow[r, "i_{n-1}^*"]                          & {[\mathbb{P}^{n-1}\C,E]} \arrow[r]                  & \cdots
\end{tikzcd}
\]
\end{proposition}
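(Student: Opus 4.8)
The plan is to establish, for each $n$, the ring isomorphism $[\SSS,E][u]/(u^{n+1})\cong[\mathbb{P}^n,E]$ realised by $u\mapsto\beta_E\,i_n^*e^E$, from which the commutativity and compatibility asserted in the diagram are immediate. The tool — and the reason for the label of the Proposition — is the multiplicative Atiyah--Hirzebruch spectral sequence
\[
E_2^{p,q}=H^p(\mathbb{P}^n;\pi_{-q}E)\ \Longrightarrow\ E^{p+q}(\mathbb{P}^n),\qquad E^k(\mathbb{P}^n):=[\mathbb{P}^n[-k],E].
\]
Since $\pi_*E$ is concentrated in even degrees and $\mathbb{P}^n$ has a CW structure with cells only in even dimensions, every nonzero entry of $E_2$ lies in even total degree, whereas each $d_r$ raises the total degree by $1$; hence all differentials vanish, the sequence degenerates at $E_2$, and $\mathrm{gr}\,E^*(\mathbb{P}^n)\cong H^*(\mathbb{P}^n;\pi_*E)\cong\pi_*(E)[t]/(t^{n+1})$ as bigraded $\pi_*E$-algebras, with $t$ of cohomological degree $2$.

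Next I would upgrade this to a genuine ring isomorphism and recognise $t$. The skeletal filtration on $E^*(\mathbb{P}^n)$ is finite, so any lift $x$ of $t$ to filtration $\ge 2$ in $E^2(\mathbb{P}^n)$ satisfies $x^{n+1}=0$ (it sits in filtration $\ge 2n+2$, which is zero), producing a filtered algebra map $\pi_*(E)[x]/(x^{n+1})\to E^*(\mathbb{P}^n)$ which is the identity on associated graded, hence an isomorphism. One may take $x=i_n^*e^E$: the Euler class of a line bundle restricts to $0$ on a point, so $i_n^*e^E$ lies in filtration $\ge 2$, and its symbol in $\mathrm{gr}^2=H^2(\mathbb{P}^n;\pi_0E)$ is the generator $t$, because it restricts, along $i_1\colon\mathbb{P}^1\hookrightarrow\mathbb{P}^n$, to $i_1^*e^E=1$ (Remark \ref{remark:need-only-rho1}) while $H^2(\mathbb{P}^n;\pi_0E)\to H^2(\mathbb{P}^1;\pi_0E)$ is an isomorphism. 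Thus $E^*(\mathbb{P}^n)\cong\pi_*(E)[i_n^*e^E]/((i_n^*e^E)^{n+1})$ as $\pi_*E$-algebras.

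It then remains to pass to cohomological degree $0$ using the $2$-periodicity. A degree-$0$ element of $\pi_*(E)[x]/(x^{n+1})$, with $|x|=2$, is a sum $\sum_j a_jx^j$ with $a_j\in\pi_{2j}(E)=\pi_0(E)\beta_E^{\,j}$, hence equals $\sum_j b_j(\beta_Ex)^j$ with $b_j\in\pi_0(E)$; therefore $[\mathbb{P}^n,E]=E^0(\mathbb{P}^n)\cong\pi_0(E)[\beta_E\,i_n^*e^E]/((\beta_E\,i_n^*e^E)^{n+1})$. As $\beta_E\,i_n^*e^E$ is precisely the image of $u$ in the diagram, the $n$-th vertical map is identified with the quotient $[\SSS,E][u]\twoheadrightarrow[\SSS,E][u]/(u^{n+1})\xrightarrow{\sim}[\mathbb{P}^n,E]$. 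Compatibility across $n$ is then automatic, since each isomorphism sends $u$ to $\beta_E\,i_n^*e^E$ and $i_n^*$ commutes with the restriction of $e^E$ (and in the limit one recovers the power-series presentation of $[\PP,E]$ discussed above).

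The only non-formal ingredient — everything else being bookkeeping about finite filtrations — is the recognition of the Atiyah--Hirzebruch polynomial generator as the Bott twist of the Euler class, i.e. the verification that $i_n^*e^E$ has the correct leading term $t$; this is exactly where the normalisation $i_1^*e^E=1$ is indispensable. It is also what makes the relation $u^{n+1}=0$ concrete: by multiplicativity of Euler classes $(i_n^*e^E)^{n+1}=e_{\mathcal{O}_{\mathbb{P}^n}(1)^{\oplus(n+1)}}$, which vanishes because $\mathcal{O}_{\mathbb{P}^n}(1)^{\oplus(n+1)}$ carries the nowhere-zero section given by the $n+1$ coordinate forms $(x_0,\dots,x_n)$ (see Section \ref{sec.7}). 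A more elementary variant, avoiding spectral sequences, proceeds by induction on $n$ through the cofibre sequence $\mathbb{P}^{n-1}\to\mathbb{P}^n\to\SSS[2n]$ — equivalently the Thom isomorphism for $\mathcal{O}(1)$, using $(\mathbb{P}^{n-1})^{\mathcal{O}(1)}\simeq\mathbb{P}^n$ from the proof of Lemma \ref{lemma:negativity} — where evenness of $E$ collapses the long exact sequence to a short exact one, and the same delicate point resurfaces as the claim that $(i_n^*e^E)^n$ maps to a generator of the contribution of the top cell.
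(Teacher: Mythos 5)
Your argument is correct and follows exactly the route the paper takes: the paper's ``proof'' is simply the observation that the (multiplicative) Atiyah--Hirzebruch spectral sequence for $\mathbb{P}^n$ collapses at the second page (with details deferred to Adams or Kochman), which is precisely the degeneration-by-evenness argument you carry out, supplemented by the standard bookkeeping (finite skeletal filtration, identification of the generator with $\beta_E\,i_n^*e^E$ via the normalisation $i_1^*e^E=1$, and $2$-periodicity to land in degree $0$). Your filling in of these details, and the alternative cell-by-cell induction you sketch, go beyond what the paper writes but do not constitute a different approach.
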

The proof of this proposition is just a consequence of the collapsing at the second page of the (cohomological) Atiyah--Hirzebruch spectral sequence for $\mathbb{P}^n\C$; a more detailed account of this argument can be found in \cite{adams} or \cite{kochman}. If we take the limit of the above diagram we get a commutative diagram 
\[
\begin{tikzcd}
{[\SSS,E][[u]]} \arrow[rr, "\cong", "u\,\mapsto \beta_E e^E"'] \arrow[dr, "u\,\mapsto \beta_E 1"'] && {[\PP,E]} \arrow[dl, "i_1^*"]
\\
& {[\mathbb{P}^1\C,E]}
\end{tikzcd}
\]
where the top horizontal arrow is an isomorphism, and in the left diagonal arrow $1$ denotes the unit of $[\SSS,E]$ seen as an element in $[\mathbb{P}^1\C[-2],E]$ via the isomorphism $\SSS\cong \mathbb{P}^1\C[-2]$. Conversely, the existence of such a commutative diagram is equivalent to the existence of a complex orientation of $E$. 
\begin{corollary}
Every even $2$-periodic ring spectrum is complex orientable.
\end{corollary}
\begin{proof}
Let $E$ be an even $2$-periodic ring spectrum. By the above discussion, to prove that $E$ is complex orientable we have to show that the distinguished ring morphism $[\SSS,E][u]\to [\mathbb{P}^1\C,E]$ sending $u \mapsto \beta_E 1$ admits a lift through $i_1^*$. The obstructions to such a lift are the obstructions to extending $\beta_E 1\colon \mathbb{P}^1\C\to E$ to a map $\beta e^E\colon \PP \to E$, through the sequence of skeleta inclusions
\[
\underbrace{\mathbb{P}^1\C}_{\text{2-skeleton}}=\underbrace{\mathbb{P}^1\C}_{
\text{3-skeleton}}\hookrightarrow \underbrace{\mathbb{P}^2\C}_{
\text{4-skeleton}}=\underbrace{\mathbb{P}^2\C}_{\text{5-skeleton}}\hookrightarrow\cdots
\]
so they lie in the singular cohomologies of $\PP$ with coefficients in the abelian groups $\pi^{\mathsf{Sp}}_{2n+1}(E)=[\SSS[2n+1],E]$. For an even cohomology theory these are all zero, so all the obstructions vanish.
\end{proof}

\section{Redde rationem villicationis tu\ae}\label{sec:redde}
Let $\rho_A$ and $\rho_B$ be two complex orientations for an even $2$-periodic $E_\infty$-ring spectrum $E$, and for every stably complex virtual vector bundle $V$ over $X$, denote by $\sigma_V^{A},\sigma_V^B$ the corresponding isomorphisms of $[X,E]$-modules $\sigma_V^{A},\sigma_V^{B}:[X,E]\xrightarrow{\cong}[X^V[-\rk{V}],E]$. If $f\colon X\to Y$ is a stably complex map, i.e., it is a smooth map such that $T_f$ is a stably complex virtual vector bundle over $X$, then we have two $E$-orientations on $-T_f$ and a commutative diagram
\[
\begin{tikzcd}
{[X,E]} \arrow[rr, "\cdot m_{-T_f}"] \arrow[rd, "\sigma_f^A"'] \arrow[rdd, "f_*^A"', bend right] &                                               & {[X,E]} \arrow[ld, "\sigma_f^B"] \arrow[ldd, "f_*^B", bend left] \\
                                                                                                 & {[X^{-T_f}[\dim X-\dim Y],E]} \arrow[d, "\varphi_{PT}^*"] &                                                                         \\
                                                                                                 & {[Y[\dim X-\dim Y],E]}                                  &                                                                        
\end{tikzcd}
\]
 In other words, we have
\[
f_*^A(a) = f_*^B (a\cdot m_{-Tf})
\]
for any $a \in [X,E]$.
We want to determine the multiplier $m_{-T_f}=f^*(m_{TY})\cdot m_{TX}^{-1}$. To do so, let $V$ be a stably complex vector bundle over $X$. 
As $m_{V+\mathbf{n}}=m_V$
for any $\mathbf{n}$, we may assume that $V$ is a complex vector bundle. Also, as the splitting principle works for every even 2-periodic $E_\infty$-ring spectrum $E$ as recalled above, it is not restrictive to assume that $V$ splits as a direct sum of complex line bundles $L_i$ classified by maps $\lambda_i\colon X\to \PP$. We then have 
\[
m_V=m_{L_1+\cdots L_k}=\prod_{i=1}^k m_{L_i} =\prod_{i=1}^k m_{\lambda_i^*L}=\prod_{i=1}^k \lambda_i^*m_{L},
\]
where $L=\mathcal{O}(1)$ is the universal line bundle on $\PP$. So
we just need to compute a single multiplier, namely $m_{L}$. 
This is defined by the equation $\tau^A_{L}=m_{L}\cdot \tau^B_{L}$. As the pullback along the zero section $\iota\colon \PP\to (\PP)^{L}$ is an isomorphism of $[\PP,E]$-modules, this is equivalent to the equation $e^A=m_{L}\cdot e^B$, where $e^A$ and $e^B$ are the Euler classes of the $E$-orientations $\rho_A$ and $\rho_B$, respectively.
By the conclusion of Section \ref{sec.7}, the Euler classes $e^A$ and $e^B$  uniquely determine a commutative diagram
\[
\begin{tikzcd}[column sep=tiny]
{[\SSS,E][[u]]} \arrow[rr, "u\,\mapsto \phi(u)"] \arrow[rd, "u\,\mapsto {\beta_E} e^A"'] &               & {[\SSS,E][[u]]} \arrow[ld, "u\,\mapsto {\beta_E} e^B"] \\
                                                     & {[\mathbb{P}^{\infty}\C,E]} 
\end{tikzcd}
\]
where every arrow is a ring isomorphism,
and so
\[
m_{L}=\frac{\phi(u)}{u}\biggr\vert_{u={\beta_E} e^B}.
\]
Therefore, 
for any complex vector bundle $V$ on $X$ we have
\[
m_V=\prod_{i} \frac{\phi(u)}{u}\biggr\vert_{u={\beta_E}\lambda_i^* e^B}.
\]
\begin{definition}\label{prodtodd}
It is customary to call $m_{-V}$ the \emph{Todd class} of $V$ relative to the two orientations $\rho_A$ and $\rho_B$ and to denote it $\mathrm{td}_{A,B}(V)$, i.e., 
\[
\mathrm{td}_{A,B}(V)=\prod_{i} \frac{u}{\phi(u)}\biggr\vert_{u={\beta_E}\lambda_i^*e^B}.
\]
By introducing the Todd function
\[
\mathrm{td}_{A,B}(u)=\frac{u}{\phi(u)},
\]
the above takes the form
\[
\mathrm{td}_{A,B}(V)=\prod_{i} \mathrm{td}_{A,B}(u)\biggr\vert_{u={\beta_E}\lambda_i^*e^B}.
\]
\end{definition}
\begin{remark}
Notice that, since the restrictions of $\beta_Ee^A$ and $\beta_Ee^B$ to $\mathbb{P}^1\C$ coincide (they are both equal to $\beta_E 1$), one has $\phi(u) = u + o(u)$, hence
\[
\mathrm{td}_{A,B}(u) = 1+o(1).
\]
\end{remark}
With this notation we have
\[
m_{-T_f}=\mathrm{td}_{A,B}(T_f)
\]
and the product formula for the Todd class from Definition \ref{prodtodd} tells how to express this as a product of characteristic classes for $TX$ and $TY$. Summing up, we have proven the following
\begin{theorem}[GHRR for a pair of complex orientations]
Let $\rho_A$ and $\rho_B$ be two complex orientations for an even $2$-periodic $E_\infty$-ring spectrum $E$, and let $f\colon X\to Y$ be a stably complex map. Then 
the two pushforward maps $f^A_*,f^B_*\colon [X,E]\to [Y[\dim X-\dim Y],E]$ are related by the Grothendieck--Hirzebruch--Riemann--Roch formula
\[
f_*^A(a) = f_*^B (a\cdot \mathrm{td}_{A,B}(T_f)),
\]
for any $a\in [X,E]$.
\end{theorem}
\begin{remark}
By multiplicativity and functoriality of the multipliers and by the projection formula, the above identity can be written as
\[
f_*^A(a)\cdot\mathrm{td}_{AB}(TY)=f_*^B(a\cdot\mathrm{td}_{AB}(TX)).
\]
\end{remark}

\section{Ipse nunc surgat nobis dicatus Mephistophilis!} 

Let now $\rho\colon MU\to E$ be a complex orientation of $E$ and let $\psi\colon E\to F$ be a morphism of homotopy commutative ring spectra. Then we have the following.
\begin{lemma}\label{lemma:HRR}
For any stably complex map $f\colon X \to Y$, the diagram
\[
\begin{tikzcd}
{[X,E]} \arrow[r, "\psi_*"] \arrow[d, "f_*^{\rho}"'] & {[X,F]} \arrow[d, "f_*^{\psi_*\rho}"] \\
{[Y[\dim X-\dim Y],E]} \arrow[r, "\psi_*"]                            & {[Y[\dim X-\dim Y],F]}                                
\end{tikzcd}
\]
commutes.
\end{lemma}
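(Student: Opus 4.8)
The plan is to unwind both vertical maps into the two-step composition that \emph{defines} the pushforward and then check the two resulting squares separately. Recall from Section~\ref{sec:integration} and the definition of $f_*$ that, for the $E$-oriented stably complex map $f\colon X\to Y$, the pushforward $f_*^{\rho}$ is by construction the composite
\[
[X,E]\xrightarrow{\ \sigma_{-T_f}^\rho\ } [X^{-T_f}[\dim X-\dim Y],E]\xrightarrow{\ \varphi_{PT}^*\ }[Y[\dim X-\dim Y],E],
\]
and likewise $f_*^{\psi_*\rho}$ is the analogous composite with $F$ in place of $E$ (this makes sense because $\psi_*\rho$ is again a complex orientation, hence $E$-orients $-T_f$). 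Thus the square in the statement is the outer rectangle obtained by stacking, along the common middle row $[X^{-T_f}[\dim X-\dim Y],-]$, the square
\[
\begin{tikzcd}
{[X,E]} \arrow[r,"\psi_*"] \arrow[d,"\sigma_{-T_f}^\rho"'] & {[X,F]} \arrow[d,"\sigma_{-T_f}^{\psi_*\rho}"] \\
{[X^{-T_f}[\dim X-\dim Y],E]} \arrow[r,"\psi_*"] & {[X^{-T_f}[\dim X-\dim Y],F]}
\end{tikzcd}
\]
on top of the square with vertical maps $\varphi_{PT}^*$ and horizontal maps $\psi_*$. It suffices to prove that both commute.

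The bottom square commutes for a purely formal reason: $\varphi_{PT}^*$ is precomposition with the map of spectra $\varphi_{PT}\colon Y\to X^{-T_f}$ while $\psi_*$ is postcomposition with $\psi\colon E\to F$, and precomposition with a fixed morphism always commutes with postcomposition with a fixed morphism. Equivalently, $\psi_*\colon[-,E]\Rightarrow[-,F]$ is a natural transformation of functors $h\mathsf{Sp}^{\mathrm{op}}\to\mathsf{Ab}$, evaluated here on the arrow $\varphi_{PT}$.

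For the top square I would use the Remark relating $\psi_*$ to complex orientations, which says that the $\psi_*\rho$-orientation is given on Thom classes by $\tau_V^{\psi_*\rho}=\psi_*(\tau_V^{\rho})$ for every stably complex virtual bundle $V$; apply this with $V=-T_f$. Since an $E$-orientation $\sigma_V$ is, by definition, the $[X,E]$-module isomorphism sending $a$ to $a\cdot\tau_V$, commutativity of the top square amounts exactly to the identity
\[
\psi_*\bigl(a\cdot \tau_{-T_f}^{\rho}\bigr)=\psi_*(a)\cdot \psi_*\bigl(\tau_{-T_f}^{\rho}\bigr),\qquad a\in[X,E],
\]
where $\cdot$ denotes the $[X,E]$- (resp. $[X,F]$-) module action on the $E$- (resp. $F$-) cohomology of $X^{-T_f}[\dim X-\dim Y]$.

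The heart of the matter — and the only step that needs any genuine verification — is therefore the compatibility of $\psi_*$ with these module structures. This is the evident module-level extension of the Remark "$\psi_*\colon[X,E]\to[X,F]$ is a ring homomorphism": the module pairing is assembled from the smash pairing $[X,E]\otimes[X^{-T_f},E]\to[X\otimes X^{-T_f},E\otimes E]$, pullback along the $X$-comodule structure map $\Delta_{-T_f}\colon X^{-T_f}\to X\otimes X^{-T_f}$, and postcomposition with $m_E$. The first two constructions are natural in the coefficient spectrum, and the last is compatible with $\psi$ precisely because $\psi$ is a morphism of ring spectra, i.e. $\psi\circ m_E=m_F\circ(\psi\otimes\psi)$. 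Chasing these three pieces through yields the displayed identity, and stacking the two commuting squares proves the lemma. I do not anticipate any real obstacle: the entire content is bookkeeping of the comodule/module structures carried by Thom spectra, together with the two Remarks already established.
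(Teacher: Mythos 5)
Your proposal is correct and follows essentially the same route as the paper: you split $f_*$ into $\sigma_{-T_f}$ followed by $\varphi_{PT}^*$, note that the $\varphi_{PT}^*$-square commutes because precomposition and postcomposition commute, and reduce the $\sigma$-square to $\tau^{\psi_*\rho}_{-T_f}=\psi_*(\tau^{\rho}_{-T_f})$ together with compatibility of $\psi_*$ with the module structures. The only difference is that you spell out the latter compatibility (naturality of the smash pairing and the comodule pullback, plus $\psi\circ m_E=m_F\circ(\psi\otimes\psi)$), which the paper leaves implicit in its remark that $\psi_*$ sends $1$ to $1$.
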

\begin{proof} The diagram
\[
\begin{tikzcd}
{[X,E]} \arrow[r, "\psi_*"] \arrow[d, "\sigma_{-T_f}"'] & {[X,F]} \arrow[d, "\psi_*\sigma_{-T_f}"] \\
{[X^{-T_f}[\dim X-\dim Y],E]} \arrow[r, "\psi_*"]                            & {[X^{-T_f}[\dim X-\dim Y],F]} \end{tikzcd}
\]
commutes, as $\psi_*$ maps the element $1$ in $[X,E]$ to the element 1 in $[X,F]$ and $\tau^{\psi_*\rho}_{-T_f}=\psi(\tau^\rho_{-T_f})$ by definition of $\psi_*\rho$. The diagram
\[
\begin{tikzcd}
{[X^{-T_f}[\dim X-\dim Y],E]} \arrow[r, "\psi_*"] \arrow[d, "\varphi_{PT}^*"'] & {[X^{-T_f}[\dim X-\dim Y],F} \arrow[d, "\varphi_{PT}^*"] \\
{[Y[\dim X-\dim Y],E]} \arrow[r, "\psi_*"]                            & {[Y[\dim X-\dim Y],F]} \end{tikzcd}
\]
trivially commutes as post-compositions and pre-compositions of morphisms commute.
\end{proof}
We also have
\begin{proposition}\label{prop:HRR}
Let $\rho_E\colon MU\to E$ and $\rho_F\colon MU\to F$ be complex orientations for the $E_\infty$-ring spectra $E$ and $F$, respectively, and let $\psi\colon E\to F$ be a morphism of ring spectra. Then, for any stably complex map $f\colon X\to Y$ and any $a\in [X,E]$, the following Grothendieck--Hirzebruch--Riemann--Roch like identity holds:
\[
\psi_*(f_*^{\rho_E}(a))=f_*^{\rho_F}(\psi_*(a)\cdot \mathrm{td}_{\psi_*\rho_E,\rho_F}(T_f))
\]
\end{proposition}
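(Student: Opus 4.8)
The plan is to obtain this Grothendieck--Hirzebruch--Riemann--Roch identity by concatenating the two results already established: Lemma~\ref{lemma:HRR}, which slides $\psi_*$ through a pushforward at the price of transporting the orientation along $\psi$, and the change-of-orientation computation carried out in Section~\ref{sec:redde}, which measures the discrepancy between two pushforwards on one and the same target spectrum by a Todd class. No new geometric input is required: the entire content is in correctly composing these two ingredients.

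First I would note that $\psi_*\rho_E\colon MU\xrightarrow{\rho_E}E\xrightarrow{\psi}F$ is itself a complex orientation of $F$, being a composite of $E_\infty$-ring maps (here, as in Lemma~\ref{lemma:HRR}, we take $\psi$ to be a morphism of $E_\infty$-ring spectra). Applying Lemma~\ref{lemma:HRR} with $\rho=\rho_E$ then gives
\[
\psi_*\bigl(f_*^{\rho_E}(a)\bigr)=f_*^{\psi_*\rho_E}\bigl(\psi_*(a)\bigr)
\]
for every $a\in[X,E]$. Now $\psi_*\rho_E$ and $\rho_F$ are two complex orientations of the single spectrum $F$, so the analysis of Section~\ref{sec:redde} applies with $A=\psi_*\rho_E$ and $B=\rho_F$, yielding
\[
f_*^{\psi_*\rho_E}(b)=f_*^{\rho_F}\bigl(b\cdot\mathrm{td}_{\psi_*\rho_E,\rho_F}(T_f)\bigr)
\]
for any $b\in[X,F]$. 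Setting $b=\psi_*(a)$ and substituting into the previous display gives precisely
\[
\psi_*\bigl(f_*^{\rho_E}(a)\bigr)=f_*^{\rho_F}\bigl(\psi_*(a)\cdot\mathrm{td}_{\psi_*\rho_E,\rho_F}(T_f)\bigr),
\]
which is the assertion.

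The one point deserving care, and the only thing resembling an obstacle, is that Section~\ref{sec:redde} was written with an even $2$-periodic target in mind, whereas $F$ is only assumed here to admit a complex orientation; so I would spell out that that derivation uses complex orientability alone. Indeed, the isomorphism $[\PP,F]\cong[\SSS,F][[u]]$ comes from the collapse of the Atiyah--Hirzebruch spectral sequence (Proposition~\ref{prop.AHSS}), which needs only a complex orientation; the reduction of an arbitrary stably complex bundle to a sum of line bundles is the splitting principle, valid for every complex orientable $E_\infty$-ring spectrum; and the multiplicativity $m_{V+W}=m_Vm_W$ together with naturality under pullback is Proposition~\ref{prop3.8}. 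Hence $\mathrm{td}_{\psi_*\rho_E,\rho_F}(T_f)$ is well defined and the formula of Section~\ref{sec:redde} holds for $F$ verbatim. (If one prefers to remain literally within the setup of Section~\ref{sec:redde}, it suffices to assume $E$ and $F$ even $2$-periodic, which already covers the motivating case $\psi=\mathrm{ch}\colon KU\to HP_{\mathrm{ev}}\Q$.) With this bookkeeping done, the proof is exactly the chain of equalities above.
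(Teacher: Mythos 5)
Your proof is correct and follows essentially the same route as the paper's: first Lemma~\ref{lemma:HRR} to trade $\psi_*$ of the pushforward for the pushforward along the transported orientation $\psi_*\rho_E$, then the change-of-orientation formula $f_*^{A}(a)=f_*^{B}\bigl(a\cdot\mathrm{td}_{A,B}(T_f)\bigr)$ of Section~\ref{sec:redde} applied to the two orientations $A=\psi_*\rho_E$ and $B=\rho_F$ of $F$. Your extra bookkeeping remark about the even $2$-periodicity hypothesis under which Section~\ref{sec:redde} is written, versus the bare $E_\infty$ hypothesis in the statement, is a fair point the paper glosses over, but it does not alter the argument.
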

\begin{proof}
By Lemma \ref{lemma:HRR}, we have
\[
\psi_*(f_*^{\rho_E}(a))=f^{\psi_*\rho_E}_*(\psi_* (a)).
\]
The conclusion then follows from the second-last equation in Section \ref{sec:redde}.
\end{proof}
The statement of Proposition \ref{prop:HRR} is equivalent to the commutativity of the diagram
\[
\begin{tikzcd}
{[X,E]} \arrow[d, "f_*^{\rho_E}"'] \arrow[r, "{\mathrm{td}_{\psi_*\rho_E,\rho_F}(T_f)\cdot \psi_*}"] & {[X,F]} \arrow[d, "f_*^{\rho_F}"] \\
{[Y[\dim X-\dim Y],E]} \arrow[r, "\psi_*"]                                                           & {[Y[\dim X-\dim Y],F]}           
\end{tikzcd}
\]

\begin{remark}
As a particular case, one can consider $E=MU$ and $\rho_E$ to be the identity morphism of $MU$. Taking $F=HP_{\mathrm{ev}}\Q$ and $\rho_F\colon MU\to HP_{\mathrm{ev}}\Q$ the standard complex orientation $\rho_H$ of $HP_{\mathrm{ev}}\Q$, one sees from Proposition \ref{prop:HRR} that for any complex orientation $\psi\colon MU\to HP_{\mathrm{ev}}\Q$ and any complex manifold $X$ of complex dimension $n$ one has a commutative diagram

\[
\begin{tikzcd}
{[X,MU]} \arrow[d, "\pi_*^{\mathrm{id}_{MU}}"'] \arrow[rr, "{\mathrm{td}_{\psi,\rho_H}(TX)\cdot \psi_*}"] && {[X,HP_{\mathrm{ev}}\Q]} \arrow[d, "\pi_*^{\rho_H}"] \\
\Omega^U_{2n} \arrow[rr, "\psi_*"]                                                                        && {{\mathbb{Q}\beta^n},}                              
\end{tikzcd}
\]
where $\Omega^U_{2n}$ is the $2n$-dimensional complex cobordism group. The image of the unit element $1\in [X,MU]$ via $\pi_*^{\mathrm{id}_{MU}}$ is the complex cobordism class of $X$, see \cite{quillen1971elementary}. The pushforward map in even periodic rational singular cohomology induced by its standard complex orientation is the periodic version of the usual pushforward map in rational singular cohomology. In particular, if $X$ is a compact complex manifold, the pushforward map
\[
\int^{HP_{\mathrm{ev}}\Q}_X\colon \bigoplus_{i\in \mathbb{Z}}H^{2i}(X;\Q)\beta^i\to \Q\beta^{\dim_\C X}
\]
along the terminal morphism $\pi_X\colon X\to \ast$ is
\[
\int^{HP_{\mathrm{ev}}\Q}_X = \beta^{\dim_\C X}\int_X,
\]
where $\int_X$ is the usual integral in singular cohomology.. Finally, the morphism of abelian groups $\psi_\ast\colon \Omega^U_{2n}\to {\mathbb{Q}\beta^n}$ is the degree $-2n$ component of the Hirzebruch $\psi_\ast$-genus, i.e., of the graded rings homomorphism 
\[
\psi_\ast\colon \bigoplus_{n\in \mathbb{Z}}\Omega^U_{2n}\to \mathbb{Q}[\beta,\beta^{-1}].
\]
Therefore, as a particular case of Proposition \ref{prop:HRR} one finds Hirzebruch's genus formula:
\[
\psi_\ast([X])=\beta^{\dim_\C X}\int_X\mathrm{td}_{\psi,\rho_H}(TX).
\]
\end{remark}

\section{Ita ego ab hoc archetypo labor et decido} The archetype of the formula in Proposition \ref{prop:HRR} is obviously the classical Grothendieck--Hirzebruch--Riemann--Roch formula. Denote by $KU$ the spectrum representing complex $K$-theory and by $HP_{\mathrm{ev}}\Q$ the spectrum representing even periodic rational singular cohomology. Both spectra are multiplicative and even $2$-periodic.  With their standard complex orientations, their shifted Euler classes are given by $\beta_{K} e^{\rho_{K}}= \mathbf{1}_\C-L^{-1}\in [\PP,KU]$ and  by $\beta_H e^{\rho_H}=\beta c_1(L) \in [\PP,HP_{\mathrm{ev}}\Q]$, where $c_1$ is the first Chern class in singular cohomology and $\beta$ is a formal degree $-2$ variable; see, e.g. \cite[Example 1.1.5]{levine-morel} for the convention on the orientation of complex $K$-theory.

The Chern character ${{\rm ch}}\colon KU \to HP_{\mathrm{ev}}\Q$ provides a multiplicative map of even $2$-periodic cohomology theories (i.e. $\mathrm{ch}(\beta_{K})=\beta_H$, see Example \ref{example:chern-bott}) from complex $K$-theory to periodic rational singular cohomology. It can be seen as the composition
\[
\begin{tikzcd}
KU \arrow[r, "(-)_\Q"'] \arrow[rr, "{\rm ch}", bend left] & KU_\Q \arrow[r, "\cong"', "\Phi"] & HP_{\mathrm{ev}}\Q,
\end{tikzcd}
\]
where $(-)_\Q$ is the rationalization map and  $\Phi\colon KU_\Q \cong HP_{\mathrm{ev}}\Q$ is the equivalence given by the splitting of rational spectra in sums of Eilenberg--Mac Lane spectra, normalised so as to map the Bott element of $KU_\Q$ to the Bott element of $HP_{\mathrm{ev}}\Q$.
By the general splitting for rational spectra we have
\[
KU_\Q\cong \oplus_{i\in \mathbb{Z}} H\pi_i^{\mathsf{Sp}}(KU_\Q)[i]\cong \oplus_{i\in \mathbb{Z}} H(\pi_i^{\mathsf{Sp}} KU)\otimes \Q[i]\cong \oplus_{j\in \mathbb{Z}} H\Q[2j]=HP_{\mathrm{ev}}\Q.
\]
Moreover, naturality of these equivalences with respect to the monoidal structure of spectra implies that $KU_\Q\cong HP_{\mathrm{ev}}\Q$ is an equivalence if $E_\infty$-ring spectra.
A good reference for this result in a cohomological flavour can be found in \cite{hilton1971general}. Now, to get an explicit expression for the equivalence $\Phi\colon KU_\Q\cong HP_{\mathrm{ev}}\Q$ induced by rationalization, recall that $KU$ is generated by the class of the universal line bundle $L$, so we only need to determine $\Phi(L)$. This is an element in $[\PP,HP_{\mathrm{ev}}\Q]$, so by the results in Section \ref{sec.7}, there exists a unique formal power series 
\[
f(u)=\sum_{k=0}^\infty f_k u^k
\]
with coefficients in $\Q=[\SSS,HP_{\mathrm{ev}}\Q]$ such that 
\[
\Phi(L)=f(\beta c_1(L)).
\]
By naturality with respect to pullbacks, and since $\Phi$ is a ring homomorphism, we have
\[
1=\Phi(\mathbf{1}_\mathbb{C})=f_0,
\]
so we can write
\[
f(u)=e^{g(u)}
\]
for a unique formal power series
\[
g(u)=\sum_{k=0}^\infty g_k u^k
\]
with $g_0=0$. Again by naturality with respect to pullbacks and since the tensor product of vector bundles induces the product in $K$-theory, we have, for any $n\in \mathbb{Z}$,
\[
e^{g(n\, u)}\biggr\vert_{\beta c_1(L)}=e^{g(u)}\biggr\vert_{n\beta c_1(L)}=\Phi(L^{\otimes n})=\Phi(L)^{n}=e^{n\, g(u)}\biggr\vert_{\beta c_1(L)}.
\]
As evaluating at $\beta c_1(L)$ is an isomorphism $\Q[[u]]\xrightarrow{\sim} HP_{\mathrm{ev}}\Q(\PP)$, this gives
\[
g(n\, u)=n\ g(u),\qquad \text{for any }n\in \mathbb{Z},
\]
so $g(u)$ is a linear function: $g(u)=g_1\, u$ for some $g_1\in \Q$. Imposing that $\Phi$ preserves Bott elements we find
\[
\beta c_1(L\vert_{\mathbb{P}^1\C})=
1-e^{-g_1 \beta c_1(L\vert_{\mathbb{P}^1\C})}= g_1 \beta c_1(L\vert_{\mathbb{P}^1\C})),
\]
and so $g_1=1$, i.e., $f(u)=e^u$ and $\Phi=\mathrm{ch}$.
 As we are interested in
\[
\mathrm{td}_{\mathrm{ch}_*\rho_{K},\rho_{H}}(u),
\]
we have to identify the map $\phi$ determined by the commutative diagram
\[
\begin{tikzcd}[column sep=tiny]
{[\SSS,HP_{\mathrm{ev}}\Q][[u]]} \arrow[rr, "u\,\mapsto \phi(u)"] \arrow[rd, "u\,\mapsto \mathrm{ch}(\mathbf{1}_\C-L^{-1})"'] &               & {[\SSS,HP_{\mathrm{ev}}\Q][[u]]} \arrow[ld, "u\,\mapsto \beta c_1(L)"] \\
                                                     & {[\mathbb{P}^{\infty}\C,HP_{\mathrm{ev}}\Q]} &
\end{tikzcd}
\]
As $\mathrm{ch}(\mathbf{1}_\C-L^{-1})=1 - e^{-\beta c_1 (L)}$, the map $\phi$ is 
\[
\phi(u)=1-e^{-u}.
\]
Therefore the Todd function associated with the Chern character and the standard orientations of complex $K$-theory and even periodic rational cohomology is
\[
\mathrm{td}_{\mathrm{ch}_*\rho_{K},\rho_{H}}(u)=\frac{u}{1-e^{-u}}.
\]

\section{Exeunt}
Readers so patient to have read until here may be wondering what is the non-topological half of the Grothendieck--Hirzebruch--Riemann--Roch theorem we were hinting at in the title. To explain this, consider a compact complex manifold $X$ together with a \emph{holomorphic} vector bundle $V$ over it. Then the Hirzebruch--Riemann--Roch theorem can be stated as the identity
\[
\chi(X;V)=\int_X \mathrm{ch}(V)\, \mathrm{td}(TX),
\]
where on the left we have the holomorphic Euler characteristic of $X$ with coefficients in the holomorphic bundle $V$ (or, equivalently, in its sheaf $\mathcal{V}$ of holomorphic sections). This Euler characteristic is the pushforward in the analytical $K$-theory of $X$ (or equivalently in the bounded derived category of coherent sheaves on $X$) of the element $[V]$ in $KU^{\mathrm{an}}(X)$ to an element in $KU^{\mathrm{an}}(\ast)=\mathbb{Z}$ along the terminal morphism $\pi_X\colon X\to *$. That is, we have 
\[
\chi(X;V)=\pi_{X,*}^{KU,\mathrm{an}}([V]).
\]
By the discussion in the previous sections, the statement of the Hirzebruch--Riemann--Roch theorem can be rewritten as
\[
\overbrace{\pi_{X,*}^{KU,\mathrm{an}}([V])=\phantom{\pi_{X,*}^{KU}([V])}}^{\text{analytical/algebro-geometrical  part}} \hskip -44 pt \underbrace{\pi_{X,*}^{KU}([V]) = \int_X \mathrm{ch}(V)\, \mathrm{td}(TX)}_{\text{topological part}},
\]
where the right part of the identity is what we have discussed in this note, while the left part of the identity, i.e., the identification between the push-forward in analytic (or algebro-geometric) $K$-theory and the pushforward in topological $K$-theory is a deep result in the holomorphic (or algebro-geometric) setting, unattainable by purely topological methods. Analogous considerations apply to the more general case of the pushforward along a proper holomorphic map between holomorphic manifolds considered in the Grothendieck--Hirzebruch--Riemann--Roch theorem.

As a conclusion, let us recall how to determine the Todd function $u/(1-e^{-u})$ just by assuming the identity $\pi_{X,*}^{KU,\mathrm{an}}([V])=\pi_{X,*}^{KU}([V])$, where $V$ is a 
holomorphic vector bundle $V$ over a compact complex manifold $X$, holds for some complex orientation $\rho\colon MU\to KU$ of the topological complex $K$-theory. Under these assumptions, by the discussion in the previous sections we will have in particular the identities  
\[
\chi(\mathbb{P}^n\C;\mathcal{O}_{\mathbb{P}^n\C})=\int_{\mathbb{P}^n\C} \mathrm{td}_{\mathrm{ch}_*\rho,\rho_H}(T{\mathbb{P}^n\C})
=\beta^{-n}
\int_{\mathbb{P}^n\C}^{HP_{\mathrm{ev}}\Q} \mathrm{td}_{\mathrm{ch}_*\rho,\rho_H}(T{\mathbb{P}^n\C})
\]
for a suitable formal power series $\mathrm{td}_{\mathrm{ch}_*\rho,\rho_H}(u)$. By the Euler exact sequence 
\[
0\to \mathcal{O}_{\mathbb{P}^n\C}\to \bigoplus_{i=0}^n\mathcal{O}_{\mathbb{P}^n\C}(1)\to T\mathbb{P}^n\C\to 0
\]
we get
\[
\mathrm{td}_{\mathrm{ch}_*\rho,\rho_H}(T{\mathbb{P}^n\C})=\mathrm{td}_{\mathrm{ch}_*\rho_K,\rho_H}(\beta c_1(\mathcal{O}_{\mathbb{P}^n\C}(1)))^{n+1}
\]
so that, writing
\[
(\mathrm{td}_{\mathrm{ch}_*\rho,\rho_H}(u))^{n+1}=\sum_{k=0}^\infty a_{n+1,k} u^k
\]
we obtain 
\[
\chi(\mathbb{P}^n\C;\mathcal{O}_{\mathbb{P}^n\C})=a_{n+1,n}.
\]
On the other hand
\[
\chi(\mathbb{P}^n\C;\mathcal{O}_{\mathbb{P}^n\C})=\sum_{q=0}^{n}h^{0,q}(\mathbb{P}^n\C)=1.
\]
Therefore the formal series $\mathrm{td}_{\mathrm{ch}_*\rho,\rho_H}(u)$ must satisfy $a_{n+1,n}=1$ for every $n$, i.e., it must have the property that the coefficient of $u^n$ in $(\mathrm{td}_\rho(u))^{n+1}$ equals 1 for any $n$. A classical computation using the Lagrange inversion formula shows that there exists a unique formal power series with this property: the power series expansion of
\[
\mathrm{td}_{\mathrm{ch}_*\rho_K,\rho_H}(u)=\frac{u}{1-e^{-u}}.
\]
As the Chern character is an isomorphism from rationalized complex $K$-theory to even 2-periodic rational singular cohomology, this shows that the only possible complex orientation of topological $K$-theory for which one can have the complex analytic/algebro-geometric half of the Grothendieck--Hirzebruch--Riemann--Roch theorem is $\rho_K$, i.e., the one with (shifted) Euler class $\beta_{KU}e^{KU}=\mathbf{1}_\C-L^{-1}$, thus motivating this apparently less natural choice with respect to $\beta_{KU}e^{KU} \allowbreak =L-\mathbf{1}_\C$. Clearly, as far as one is not concerned with the complex analytic/\allowbreak algebro-geometric half of the theorem, this second orientation, with corresponding Todd function $u/(e^u-1)$, is an equally valid choice, and it is actually quite a common choice for defining a complex orientation of topological $K$-theory in algebraic topology.

\nocite{*}
\bibliographystyle{alpha}
\bibliography{bibliography}
\vfill
\tiny \noindent 
The reproduction of part of a letter by Alexander Grothendieck has been extracted from \\
{\tt https://commons.wikimedia.org/wiki/File:Grothendieck-Riemann-Roch.jpg}\\ where it is licensed as ``This work is ineligible for copyright and therefore in the public domain because it consists entirely of information that is common property and contains no original authorship.''
\vskip .6 cm
\end{document}